\newtheorem{Thm}{Theorem}[section]
\newtheorem{Cor}[Thm]{Corollary}
\newtheorem{Eg}[Thm]{Example}
\newtheorem{Def}[Thm]{Definition}
\newtheorem{Lem}[Thm]{Lemma}
\newtheorem{Prop}[Thm]{Proposition}
\newtheorem{Rmk}[Thm]{Remark}
\newcommand{\Cp}{\mathbb{C}_p}
\newcommand{\FF}{\mathbb{F}}
\newcommand{\OCp}{\mathcal{O}_{\Cp}}
\newcommand{\Qp}{\mathbb{Q}_p}
\newcommand{\QQ}{\mathbb{Q}}
\newcommand{\ZZ}{\mathbb{Z}}
\author{Christopher Davis}
\address{University of California, Irvine, Dept of
Mathematics, Irvine, CA 92697}
\email{davis@math.uci.edu}
\date{\today}
\author{Kiran S. Kedlaya}
\address{University of California, San Diego, Dept of Mathematics, La Jolla, CA 92093}
\email{kedlaya@ucsd.edu}
\thanks{Davis was supported by the Max-Planck-Institut f\"ur Mathematik (Bonn).
Kedlaya was supported by NSF (CAREER grant DMS-0545904, grant DMS-1101343), 
DARPA (grant HR0011-09-1-0048),
MIT (NEC Fund, Green professorship), 
UCSD (Warschawski professorship).}
\begin{document}

\title{On the Witt vector Frobenius}

\begin{abstract}
We study the kernel and cokernel of the Frobenius map 
on the $p$-typical Witt vectors of a commutative ring, not necessarily
of characteristic $p$. 
We give some equivalent conditions to surjectivity of the Frobenus map on both finite and infinite length
Witt vectors; the former condition turns out to be stable under certain integral extensions,
a fact which relates closely to a generalization of Faltings's almost purity theorem.
\end{abstract}

\maketitle

\section*{Introduction}

Fix a prime number $p$. To each ring $R$ (always assumed commutative and with unit), we may associate in a functorial
manner the ring of \emph{$p$-typical Witt vectors} over $R$, denoted $W(R)$, and an endomorphism $F$ of $W(R)$
called the \emph{Frobenius endomorphism}. The ring $W(R)$ is set-theoretically
an infinite product of copies of $R$, but with an exotic ring structure; for example, for $R$ a perfect ring of characteristic $p$, $W(R)$ is the unique strict $p$-ring with $W(R)/pW(R) \cong R$.
In particular, for $R = \FF_p$, $W(R) = \ZZ_p$.


In this paper, we study the kernel and cokernel of the Frobenius endomorphism on $W(R)$.
In case $p=0$ in $R$,
this map is induced by functoriality from the Frobenius endomorphism of $R$,
and in particular is injective when $R$ is reduced and bijective when $R$ is perfect.
If $p \neq 0$ in $R$, the Frobenius map is somewhat more mysterious; to begin with, it is
never injective. In fact, it is easy and useful to construct many elements of the kernel.
On the other hand, Frobenius is surjective in some cases, although these seem to be somewhat artificial;
the simplest nontrivial example we have found is the ring of integers in a spherical completion of $\overline{\Qp}$.

While surjectivity of Frobenius on full Witt vectors is rather rare, some weaker conditions turn out
to be more commonly satisfied and more relevant to applications. For instance, one can view
the full ring of Witt vectors as an inverse limit of finite-length truncations, and the condition of 
surjectivity of Frobenius on finite levels is satisfied quite often. For instance, 
this holds for $R$ equal to the ring of integers in any infinite algebraic extension of $\QQ$ which is sufficiently ramified at $p$ (e.g., the $p$-cyclotomic extension).
In fact, this condition can be used to give a purely ring-theoretic
formulation of a very strong generalization of Faltings's \emph{almost purity theorem}, based on recent work
by the second author and Liu \cite{KL11} and by Scholze \cite{Sch11}.

One principal motivation for studying the Frobenius on Witt vectors is
to reframe $p$-adic Hodge theory in terms of Witt vectors of characteristic $0$ rings, and ultimately
to globalize the constructions with an eye towards study of global \'etale cohomology, $K$-theory, and $L$-functions.
We will pursue these goals in subsequent papers.

\section{Witt vectors}

Throughout this section, let $R$ denote an arbitrary ring.
For more details on the construction of $p$-typical Witt vectors, see \cite[Section~0.1]{Ill79}.
\begin{Def}
For each nonnegative integer $n$, the ring $W_{p^n}(R)$ is defined to have underlying set
$W_{p^n}(R) := R^{n+1}$ with an exotic ring structure characterized by functoriality in $R$ and the property that for $i=0,\dots,n$, the $p^i$-th \emph{ghost component map} $w_{p^i}: W_{p^n}(R) \to R$ defined by
\[
w_{p^i}(r_1, r_p, \dots, r_{p^n}) = r_1^{p^i} + p r_p^{p^{i-1}} + \cdots + p^i r_{p^i}
\]
is a ring homomorphism. These rings carry \emph{Frobenius homomorphisms}
$F: W_{p^{n+1}}(R) \to W_{p^n}(R)$,  again functorial in $R$, such that for $i=0,\dots,n$, we have $w_{p^i} \circ F = w_{p^{i+1}}$. Moreover, the \emph{Verschiebung} maps $V: W_{p^n}(R) \to W_{p^{n+1}}(R)$ defined by the formula
$V(r_1, \dots, r_{p^n}) = (0, r_1,\dots,r_{p^n})$ are additive (but not multiplicative).

There is a natural restriction map $W_{p^{n+1}}(R) \to W_{p^n}(R)$ obtained by forgetting the last component;
define $W(R)$ to be the inverse limit of the $W_{p^n}(R)$ via these restriction maps.
The Frobenius homomorphisms at finite levels then collate to define another Frobenius homomorphism $F: W(R) \to W(R)$;
there is also a collated Verschiebung map $V: W(R) \to W(R)$.  The ghost component maps also collate to define a ghost map:  $w: W(R) \rightarrow R^{\mathbb{N}}$.  We equip the target with component-wise ring operations; the map $w$ is then a ring homomorphism.

In either $W_{p^n}(R)$ or $W(R)$, an element of the form $(r, 0, 0, \dots)$ is called a \emph{Teichm\"uller element}
and denoted $[r]$. These elements are multiplicative: for all $r_1, r_2 \in R$, $[r_1 r_2] = [r_1] [r_2]$.

Some additional properties of Witt vectors are the following.
\begin{enumerate}
\item[(a)]
For $r \in R$, $F([r]) = [r^p]$.
\item[(b)]
For $\underline{x} \in W_{p^n}(R)$, $(F \circ V)(\underline{x}) = p\underline{x}$.
\item[(c)]
For $\underline{x} \in W_{p^n}(R)$ and $\underline{y} \in W_{p^{n+1}}(R)$, $V(\underline{x} F(\underline{y})) = V(\underline{x}) \underline{y}$.
\item[(d)]
For $\underline{x} \in W_{p^n}(R)$, $\underline{x} = \sum_{i=0}^n V^i([x_{p^i}])$.
\end{enumerate}
\end{Def}

\begin{Rmk} \label{universal identities}
A standard method of proving identities about Witt vectors and their operations
is \emph{reduction to the universal case}: take $R$ to be a polynomial ring in many variables over $\ZZ$,
form Witt vectors whose components are distinct variables, then verify the desired identities at the level of ghost 
components. This suffices because $R$ is now $p$-torsion-free, so the ghost map is injective.
\end{Rmk}


We will need a couple of other $p$-divisibility properties.
 We first prove the following lemma.

\begin{Lem} \label{components of p}
In $W(\ZZ/p^2 \ZZ)$, we have
\[
p = (p, (-1)^{p-1}, 0, 0, \dots) = [p] + V([(-1)^{p-1}]).
\]
\end{Lem}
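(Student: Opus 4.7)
The second equality is immediate from property (d): the Witt vector $(p, (-1)^{p-1}, 0, 0, \dots)$, read off as $\sum_i V^i([x_{p^i}])$ with $x_1 = p$, $x_p = (-1)^{p-1}$, and $x_{p^i} = 0$ for $i \geq 2$, is exactly $[p] + V([(-1)^{p-1}])$.

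For the first equality, the strategy is to invoke the universal-case method of Remark \ref{universal identities}. Because $W$ is functorial and acts set-theoretically componentwise, the kernel of the reduction $W(\ZZ) \twoheadrightarrow W(\ZZ/p^2\ZZ)$ consists of precisely those Witt vectors whose every component lies in $p^2\ZZ$. Setting $z := (p, (-1)^{p-1}, 0, 0, \dots) \in W(\ZZ)$, it then suffices to show that $p - z \in W(\ZZ)$ has every component divisible by $p^2$.

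Since $\ZZ$ is $p$-torsion-free, I will detect this condition on the (injective) ghost map. A short induction using the recursion $p^i \xi_{p^i} = w_{p^i}(\underline{\xi}) - \sum_{j < i} p^j \xi_{p^j}^{p^{i-j}}$ shows that $\underline{\xi} \in W(\ZZ)$ has every component in $p^2\ZZ$ if and only if $w_{p^i}(\underline{\xi}) \in p^{i+2}\ZZ$ for all $i \geq 0$; the key input both ways is the elementary estimate $j + 2p^{i-j} \geq i+2$ for $0 \leq j \leq i$.

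Applied to $\underline{\xi} = p - z$: the ghost components of $p$ are all equal to $p$, while $w_1(z) = p$ and $w_{p^i}(z) = p^{p^i} + p(-1)^{(p-1)p^{i-1}}$ for $i \geq 1$. Since $(p-1)p^{i-1}$ is even in every case except $(p, i) = (2, 1)$, the difference $p - w_{p^i}(z)$ equals $-p^{p^i}$ generically and $0$ in the exceptional case, both of which lie in $p^{i+2}\ZZ$ by the easy bound $p^i \geq i+2$ in the relevant range. I expect the only real subtlety to be this tiny parity check at $(p, i) = (2, 1)$; the rest of the verification is routine bookkeeping with exponents of $p$.
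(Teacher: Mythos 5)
Your proof is correct. It is close in spirit to the paper's argument --- both ultimately come down to the ghost map on the $p$-torsion-free ring $\ZZ$, an induction up the ghost components, and the same elementary estimate $j + 2p^{i-j} \geq i+2$ --- but it is organized differently. The paper computes the Witt coordinates of $p \in W(\ZZ)$ head-on: $x_1 = p$, $x_p = 1 - p^{p-1}$, and then an induction showing $x_{p^i} \equiv 0 \bmod p^2$ for $i \geq 2$; since $x_p$ is a unit rather than an element of $p^2\ZZ$, that induction needs the slightly finer congruence $x_p^{p^{n-1}} \equiv 1 \bmod p^{n+1}$ to control the second term in the ghost recursion. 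You instead subtract off the candidate $z = [p] + V([(-1)^{p-1}])$ in $W(\ZZ)$, prove the clean criterion that $\underline{\xi} \in W(\ZZ)$ lies in the (componentwise) kernel of $W(\ZZ) \to W(\ZZ/p^2\ZZ)$ if and only if $w_{p^i}(\underline{\xi}) \in p^{i+2}\ZZ$ for all $i$, and then verify the ghost components of $p - z$ directly, where the only delicate point is the sign at $(p,i) = (2,1)$, which you handle. This buys a uniform estimate (all components of the difference are to be shown divisible by $p^2$, so no special treatment of the $x_p$-coordinate is needed) at the modest cost of proving the kernel criterion; the paper's route avoids that general lemma but pays with the explicit computation of $x_p$ and the extra congruence. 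Both the criterion's two directions and your ghost-component bookkeeping ($p^i \geq i+2$ away from the exceptional case, and exact cancellation at $(2,1)$) check out, so the argument stands as a complete alternative proof.
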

\begin{proof}
Write $p = (x_1, x_p, \dots) \in W(\ZZ)$. Then $x_1 = p$ and 
$x_p = (p - p^p)/p = 1 - p^{p-1}$, which is congruent to 1 mod $p^2$ if $p > 2$ and to $3$ mod 4 if $p=2$.
To complete the argument, we show by induction on $n$ that for each $n \geq 1$, we have $x_{p^i} \equiv 0 \mod{p^2}$ for $2 \leq i \leq n$.
The base case $n=1$ is vacuously true. For the induction step, considering the $p^n$-th ghost component of $p$, write
\[
p^n x_{p^n} = -x_1^{p^n} + p(1 - x_p^{p^{n-1}}) - \sum_{i=2}^{n-1} p^i x_{p^i}^{p^{n-i}}.
\]
To complete the induction, it suffices to check that each term on the right side has $p$-adic valuation at least $n+2$. This is clear for the first term because $p^n \geq n+2$. For the second term, treating $p = 2$ and $p > 2$ separately, we have $x_p \equiv (-1)^{p-1} \mod{p^{p-1}}$ and so $x_p^p \equiv 1 \mod{p^3}$.
We then have $x_p^{p^{n-1}} \equiv 1 \mod{p^{3+n-2}}$, so the second term is indeed divisible by $p^{n+2}$. For the terms in the sum, the claim is again clear because $i+2p^{n-i} \geq i + 2(n-i+1) \geq n+2$.
\end{proof}

\begin{Lem} \label{83111lem}
Take $\underline{x}, \underline{y} \in W(R)$ with $F(\underline{x}) = \underline{y}$. 
\begin{enumerate}
\item[(a)]
For each nonnegative integer $i$, we have $y_{p^i} = x_{p^i}^p + px_{p^{i+1}} + pf_{p^i}(x_1,\ldots,x_{p^{i}})$, where $f_{p^i}$ is a 
certain universal polynomial with coefficients in $\ZZ$ which is homogeneous of degree $p^{i+1}$ under the weighting in which the variable $x_{p^j}$ has weight $p^j$.  
\item[(b)]
For $i \geq 1$, the coefficient of $x_1^{p^{i+1}}$ in $f_{p^i}$ equals $0$.
\item[(c)]
For $i \geq 2$, the coefficient of $x_p^{p^i}$ in $f_{p^i}$ is divisible by $p$.
\item[(d)]
The coefficient of $x_p^p$ in $f_p$ equals $-p^{p-2}$ modulo $p$.
\item[(e)]
For $p=2$ and $i \geq 2$, $f_{p^i}$ belongs to the ideal generated by
$2, x_1, x_p^p-x_{p^2}, x_{p^3}, \dots, x_{p^i}$.
\end{enumerate}
\end{Lem}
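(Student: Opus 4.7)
Throughout I work in the universal case (Remark~\ref{universal identities}) with $R = \ZZ[x_1, x_p, x_{p^2}, \dots]$ and universal Witt vector $\underline{x} = (x_1, x_p, \dots)$, so that $\underline{y} = F(\underline{x})$ is pinned down by the ghost identities $w_{p^i}(\underline{y}) = w_{p^{i+1}}(\underline{x})$.  For part (a), expand the $i$th such identity as $\sum_{j=0}^i p^j y_{p^j}^{p^{i-j}} = \sum_{j=0}^{i+1} p^j x_{p^j}^{p^{i+1-j}}$ and solve recursively for $y_{p^i}$.  The terms $p^i x_{p^i}^p$ and $p^{i+1} x_{p^{i+1}}$ on the right contribute $x_{p^i}^p + p x_{p^{i+1}}$ after division by $p^i$; the remaining combination $-\sum_{j=0}^{i-1} p^j \bigl(y_{p^j}^{p^{i-j}} - x_{p^j}^{p^{i+1-j}}\bigr)$ lies in $p^{i+1}\ZZ[x_1,\dots]$ because by induction $y_{p^j} \equiv x_{p^j}^p \pmod{p}$, so iterated ``freshman's dream'' gives $y_{p^j}^{p^{i-j}} \equiv x_{p^j}^{p^{i+1-j}} \pmod{p^{i-j+1}}$ and multiplying by $p^j$ contributes $p^{i+1}$.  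Dividing by $p^{i+1}$ defines the integer polynomial $f_{p^i}$, and weighted homogeneity of degree $p^{i+1}$ is inherited from that of each ghost polynomial under $\mathrm{wt}(x_{p^j}) = p^j$.

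The strategy for (b), (c), (d) is to specialize $\underline{x}$ to elements whose Frobenius is computable from basic Witt operations.  For (b), take $\underline{x} = [x_1]$: property~(a) of Witt vectors gives $F([x_1]) = [x_1^p]$, so $y_{p^i} = 0$ for $i \geq 1$, forcing $f_{p^i}(x_1, 0, \dots, 0) = 0$; by homogeneity the only weight-$p^{i+1}$ monomial in $x_1$ alone is $x_1^{p^{i+1}}$, so its coefficient vanishes.  For (c), take $\underline{x} = V([x_p])$: property~(b) gives $\underline{y} = p[x_p]$, and combining Lemma~\ref{components of p} with the elementary identity $\underline{a} \cdot [r] = (a_{p^j} r^{p^j})_{j \geq 0}$ (checked by ghost components) yields $p[x_p] \equiv (p x_p, (-1)^{p-1} x_p^p, 0, 0, \dots) \pmod{p^2}$, so $y_{p^i} \equiv 0 \pmod{p^2}$ for $i \geq 2$; by homogeneity this forces the coefficient of $x_p^{p^i}$ in $f_{p^i}$ into $p\ZZ$.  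For (d), expand $(x_1^p + p x_p)^p + p y_p = x_1^{p^2} + p x_p^p + p^2 x_{p^2}$ by the binomial theorem: the $k = p$ summand $p^p x_p^p$ contributes the coefficient $-p^{p-2}$ of $x_p^p$ to $f_p$.

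For (e), I first observe by ghost components that $(0, x_2, x_2^2, 0, 0, \dots) = V([x_2]) + V^2([x_2^2])$, whence
\[
\underline{y} = 2[x_2] + 2V([x_2^2]) = 2\underline{w}, \qquad \underline{w} = [x_2] + V([x_2^2]) = (x_2, x_2^2, 0, 0, \dots),
\]
with ghosts $(2 x_2, 6 x_2^2, 6 x_2^4, \dots)$.  Direct computation gives $y_2 = x_2^2$ and $y_4 = -3 x_2^4$, and then I prove $y_{2^i} \equiv 0 \pmod{4}$ for $i \geq 3$ by induction on $i$ using $2^i y_{2^i} = 6 x_2^{2^i} - \sum_{j<i} 2^j y_{2^j}^{2^{i-j}}$.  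The three key 2-adic ingredients are (i) $2^{2^i} \equiv 0 \pmod{2^{i+2}}$ for $i \geq 2$, which kills the $j=0$ term; (ii) $(-3)^{2^{i-2}} \equiv 1 \pmod{2^i}$ for $i \geq 3$, from $(-3)^2 \equiv 1 \pmod{8}$ and repeated squaring, making $4 y_4^{2^{i-2}} \equiv 4 x_2^{2^i} \pmod{2^{i+2}}$; and (iii) the terms with $j \geq 3$ have valuation $\geq j + 2^{i-j+1} \geq i+2$ by the inductive hypothesis.  Combining $y_4 \equiv x_2^4 \pmod{4}$ and $y_{2^i} \equiv 0 \pmod{4}$ for $i \geq 3$ with (a), the substitution $x_1 \mapsto 0$, $x_4 \mapsto x_2^2$, $x_{2^j} \mapsto 0$ ($j \geq 3$) forces $2 f_{2^i}$ into $4\ZZ[x_2]$, placing $f_{2^i}$ in $(2, x_1, x_p^p - x_{p^2}, x_{p^3}, \dots, x_{p^i})$.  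The delicate 2-adic induction here is the main obstacle; the other parts reduce cleanly to the universal case plus the basic Witt-vector identities recorded in the definition.
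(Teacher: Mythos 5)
Your proof is correct, and while it isolates the relevant coefficients of $f_{p^i}$ by the same specializations the paper uses (set the irrelevant variables to $0$, and in (e) restrict to the locus $x_1 = x_{p^3} = \cdots = 0$, $x_{p^2} = x_p^p$), the supporting computations run along a noticeably more ghost-theoretic route. The paper establishes (a) and the congruence $y_{p^i} \equiv x_{p^i}^p \pmod p$ by citing Illusie's formula, and then reduces (c), (d) and (e) via homogeneity to explicit computations with $p[1]$ and $[2]+V(1)+V^2(1)$ in $W(\ZZ/p^2\ZZ)$, leaning on Lemma~\ref{components of p} throughout. You instead unwind the ghost recursion $\sum_j p^j y_{p^j}^{p^{i-j}} = \sum_j p^j x_{p^j}^{p^{i+1-j}}$ directly: the lemma $a \equiv b \bmod p^k \Rightarrow a^p \equiv b^p \bmod p^{k+1}$ replaces Illusie's congruence and simultaneously gives integrality of $f_{p^i}$ in (a); the scaling identity $\underline{a}\cdot[r] = (a_{p^j} r^{p^j})_j$ in (c) does the work of the paper's homogeneity specialization $x_p = 1$; your (d) produces the exact coefficient $-p^{p-2}$, a slight sharpening of the stated residue; and for (e) you replace the paper's short chain of Witt-vector identities in $W(\ZZ/4\ZZ)$ with a $2$-adic valuation induction on $2^i y_{2^i} = 6 x_2^{2^i} - \sum_{j<i} 2^j y_{2^j}^{2^{i-j}}$. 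I checked the three inputs to that induction --- $v_2(2^{2^i}) \ge i+2$ for $i\ge 2$, $(-3)^{2^{i-2}}\equiv 1 \bmod 2^i$ for $i\ge 3$, and $j + 2^{i-j+1} \ge i+2$ for $3\le j\le i-1$ --- as well as the seeds $y_2 = x_2^2$, $y_4 = -3x_2^4$, and they all hold, with the $j=1,2$ terms cancelling the $6x_2^{2^i}$ modulo $2^{i+2}$ exactly as you say. Your route is longer but entirely self-contained (it avoids Lemma~\ref{components of p} in (d) and (e) altogether), while the paper's is shorter once that lemma is in hand; both are valid.
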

\begin{proof}
By reduction to the universal case, we see that 
$y_{p^i}$ equals a universal polynomial in $x_1,\dots,x_{p^{i+1}}$ with coefficients in $\ZZ$ which is 
homogeneous of degree $p^{i+1}$ for the given weighting.
This polynomial is congruent to $x_{p^i}^p$ modulo $p$ by
\cite[(1.3.5)]{Ill79}. Each of the remaining assertions concerns a particular coefficient of this polynomial,
and so may be checked after setting all other variables to 0.

To finish checking (a), we must check that $y_{p^i}$ does not depend on $x_{p^{i+1}}$.
We may assume $x_1 = \cdots = x_{p^i} = 0$, so that $\underline{x} = V^{i+1}([x_{p^{i+1}}])$;
then $F(\underline{x}) = p V^{i}([x_{p^{i+1}}]) = (0,\dots,0,p x_{p^{i+1}})$. 

To check (b), we may assume that $x_p = x_{p^2} = \dots = 0$, so that $\underline{x} = [x_1]$.
In this case, $F(\underline{x}) = [x_1^p]$, so the claim follows.

To check (c), we may assume that $x_1 = x_{p^2} = x_{p^3} = \cdots = 0$, so that $\underline{x} = V([x_p])$. 
In this case, the claim is that $y_{p^i} \equiv 0 \mod{p^2}$  for $i \geq 2$.
Since $F(\underline{x}) = p[x_p]$, by homogeneity it is sufficient to check the claim for $x_p = 1$.
In this case, it follows from Lemma~\ref{components of p}. We may similarly check (d).

To check (e), we may assume that $x_1 = x_{p^3} = x_{p^4} = \cdots = 0$, so that
$\underline{x} = V([x_p]) + V^2([x_{p^2}])$. By homogeneity, it is sufficient to check the claim for
$x_p = x_{p^2} = 1$.
In $W(\ZZ)$, we have $V(1) = 1 + [-1]$ by computation of ghost components, so
$1 + V(1) = 2 + [-1]$. In $W(\ZZ/4\ZZ)$, by Lemma~\ref{components of p} we have
\[
\underline{y} = [2] + 2 V(1) = [2] + V(2) = [2] + V(1 + V(1) - [-1])
= 2 + V(1) + V^2(1).
\]
This implies the desired result.
\end{proof}

\begin{Rmk}
Suppose $R$ is a ring in which $p=0$, and let $\varphi: R \rightarrow R$ denote the Frobenius homomorphism on $R$. 
By Lemma~\ref{83111lem}, we have $F(r_1,r_p,r_{p^2},\ldots) = (r_1^p, r_p^p, r_{p^2}^p,\ldots)$.  
As a result, $F$ is injective/surjective/bijective if and only if $\varphi$ is injective/surjective/bijective.  In particular,
$F$ is injective if and only if $R$ is reduced, and $F$ is bijective if and only if $R$ is perfect.
Similarly, the finite level Frobenius map $F: W_{p^{n}}(R) \rightarrow W_{p^{n-1}}(R)$, which sends $(r_1,\ldots,r_{p^n})$ to $(r_1^p,r_p^p,\ldots,r_{p^{n-1}}^p)$, is injective only if $R=0$, and is surjective
if and only if $\varphi$ is surjective.
\end{Rmk}

%

\section{The kernel of Frobenius}

When $R$ is a ring not of characteristic $p$, it is easy to see that $F: W(R) \to W(R)$ cannot be injective;
for instance, the Cartier-Dieudonn\'e-Dwork lemma
implies that $p,0,0,\dots$ arises as the sequence of ghost components of some element of $W(\ZZ)$.
More generally, one can determine exactly which elements of $R$ can occur as the first component of an element of
the kernel of $F$. This will be useful in our analysis of surjectivity of $F$. 

\begin{Def} \label{82911def}
Given a ring $R$, define the sets $I_0 = R$ and $I_i := \{r \in R \mid r^p \in pI_{i-1}\}$ for $i>0$;
it is apparent that $I_0 \supseteq I_1 \supseteq I_2 \supseteq \cdots$.
We will see below that each $I_i$ is an ideal. 
Also define $I_\infty = \cap_{i=1}^\infty I_i$, so that $I_\infty = \{r \in R \mid r^p \in p I_\infty\}$.
\end{Def}

\begin{Lem}
For each $i \geq 0$, the set $I_i$ defined above is an ideal.
\end{Lem}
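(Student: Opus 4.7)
The plan is to induct on $i$. The base case $i=0$ is immediate since $I_0=R$. For the inductive step, assume $I_{i-1}$ is an ideal; I want to show $I_i$ is closed under addition, contains $0$, and absorbs multiplication by $R$.

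The easy parts are $0\in I_i$ (as $0^p=0\in pI_{i-1}$) and absorption: for $r\in I_i$ and $s\in R$, $(sr)^p=s^p r^p\in s^p\cdot pI_{i-1}\subseteq pI_{i-1}$, using that $I_{i-1}$ is an ideal.

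The only real content is closure under addition, which I would handle via the binomial expansion
\[
(r+r')^p=r^p+r'^p+\sum_{k=1}^{p-1}\binom{p}{k}r^k r'^{p-k}.
\]
By definition of $I_i$, the terms $r^p$ and $r'^p$ lie in $pI_{i-1}$. For the cross terms, I would combine two facts: first, $\binom{p}{k}\in p\ZZ$ for $1\le k\le p-1$; second, the monomials $r^k r'^{p-k}$ lie in $I_{i-1}$. The latter is where the inductive hypothesis is crucial: since the chain $I_0\supseteq I_1\supseteq\cdots$ (which is itself easily verified by a parallel induction, as $r^p\in pI_{i-1}\subseteq pI_{i-2}$ forces $r\in I_{i-1}$) places $r,r'$ in the ideal $I_{i-1}$, any monomial in $r,r'$ of positive total degree also lies in $I_{i-1}$. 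Therefore each cross term lies in $p\cdot I_{i-1}$, and the sum $(r+r')^p$ lies in $pI_{i-1}$, giving $r+r'\in I_i$.

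I anticipate the main (if mild) obstacle to be the bookkeeping for the cross terms, and in particular making sure the argument handles the case $p=2$ correctly: there the sum has only a single term $2rr'$, which is manifestly in $pI_{i-1}$ once one knows $rr'\in I_{i-1}$. So the same argument works uniformly in $p$.
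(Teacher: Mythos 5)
Your proposal is correct and follows essentially the same route as the paper: induction on $i$, with closure under addition checked via the binomial expansion, using that the middle binomial coefficients are divisible by $p$ and that the cross-term monomials lie in the ideal $I_{i-1}$ because $x,y\in I_i\subseteq I_{i-1}$. Your extra remark verifying the chain $I_0\supseteq I_1\supseteq\cdots$ is a detail the paper simply asserts as apparent in the definition, so nothing of substance differs.
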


\begin{proof}
We proceed by induction on $i$, the case $i=0$ being obvious. Given that $I_{i-1}$ is an ideal,
it is clear that  $I_i$ is closed under multiplication by arbitrary elements of $R$.  It remains to show that if $x,y \in I_i$, then $x+y \in I_i$.  Using the definition, we must check that $x^p + px^{p-1}y + \cdots + pxy^{p-1} + y^p \in pI_{i-1}$.  That $x^p, y^p \in pI_{i-1}$ follows from $x,y \in I_i$.  That the remaining terms are in $pI_{i-1}$ follows from $x,y \in I_i \subseteq I_{i-1}$.
\end{proof}

\begin{Rmk}
If $R$ is the ring of integers in an algebraic closure of $\Qp$ (or the completion thereof), then $I_i$ is the 
principal ideal generated by any element of valuation $\frac{1}{p} + \frac{1}{p^2} + \cdots + \frac{1}{p^i}$,
while $I_\infty$ is the principal ideal generated by any element of valuation $\frac{1}{p-1}$.
\end{Rmk}

\begin{Def} \label{BallDef}
For any ring $R$, any $r_0 \in R$, and any $i \geq 0$ (including $i = \infty$), 
define $B(r_0,I_i) := r_0 + I_i = \{ r \in R \mid r - r_0 \in I_i \}$.
The notation is meant to suggest that $B$ is a \emph{ball} centered at $r_0$.
\end{Def}

The significance of the ideals $I_i$ is the following.

\begin{Prop} \label{82411kerprop}
Let $R$ be a ring, let $i$ be a positive integer, and let $n$ be either $\infty$ or an integer greater than or equal to $i$.
\begin{enumerate}
\item[(a)]
If $\underline{x}, \underline{y} \in W_{p^i}(R)$ are such that 
$x_{p^j} - y_{p^j} \in I_{n-j}$ for $j=0,\dots,i$, then for
$\underline{x}' := F(\underline{x})$, $\underline{y}' := F(\underline{y})$, we have
$x'_{p^j} - y'_{p^j} \in pI_{n-j  - 1}$ for $j=0,\dots,i-1$.
\item[(b)]
Take $\underline{x}, \underline{y} \in W_{p^i}(R)$ and put $\underline{x}' := F(\underline{x})$, $\underline{y}' := F(\underline{y})$.
If $x'_{p^j} - y'_{p^j} \in pI_{n-j}$ for $j=0,\dots,i$
and $x_{p^i} - y_{p^i} \in I_{n-i}$, then $x_{p^j} - y_{p^j} \in I_{n-j}$ for $j=0,\dots,i$.
In particular, if $\underline{x}' = \underline{y}'$, then this always holds when $n = i$.
\item[(c)]
Choose $x_1,\dots,x_{p^{i-1}}, y_1, \dots, y_{p^i} \in R$ with $x_{p^j} - y_{p^j} \in I_{n-j}$ 
for $j=0,\dots,i-1$. If $i>1$, assume also that 
$F(x_1, x_p,\dots,x_{p^{i-1}}) = F(y_1,y_p,\dots,y_{p^{i-1}})$.
Then there exists $x_{p^{i}} \in B(y_{p^i}, I_{n-i})$ such that 
$F(x_1, x_p,\dots,x_{p^{i}}) = F(y_1,y_p,\dots,y_{p^{i}})$.
\item[(d)]
For any $\underline{x} \in W(R)$ for which $x_{p^i} \in p I_\infty$ for all $i$,
there exists $\underline{y} \in W(R)$ for which $y_1 = 0$, $y_{p^i} \in I_\infty$ for all $i$,
and $F(\underline{y}) = \underline{x}$.
\end{enumerate}
\end{Prop}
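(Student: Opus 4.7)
My plan is to handle all four parts through Lemma~\ref{83111lem}(a), which expresses $F(\underline{x})_{p^j} = x_{p^j}^p + p x_{p^{j+1}} + p f_{p^j}(x_1,\dots,x_{p^j})$, combined with two elementary facts: first, $e \in I_k$ forces $e^p \in p I_{k-1}$ by definition of the ideals; second, the binomial expansion $(y+e)^p - y^p = e^p + \sum_{m=1}^{p-1} \binom{p}{m} y^{p-m} e^m$ puts a factor of $p$ on every cross term, since $p \mid \binom{p}{m}$ for $1 \le m \le p-1$. For part (a), I would expand the difference $x'_{p^j} - y'_{p^j}$ into the three pieces from Lemma~\ref{83111lem}(a). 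The binomial expansion of $x_{p^j}^p - y_{p^j}^p$ produces $e_j^p \in p I_{n-j-1}$ directly and cross terms in $p I_{n-j-1}$ via the binomial $p$-factor together with $e_j \in I_{n-j} \subseteq I_{n-j-1}$. The middle piece $p e_{j+1}$ is in $pI_{n-j-1}$ by hypothesis, and telescoping renders $p(f_{p^j}(x) - f_{p^j}(y))$ as $p$ times a polynomial combination of $e_0,\dots,e_j$, each in $I_{n-j-1}$.

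For part (b), a naive downward induction on $j$ fails because $f_{p^j}(x) - f_{p^j}(y)$ depends on $e_0,\dots,e_j$, which are not yet controlled at the moment one tries to prove $e_j \in I_{n-j}$. My workaround is a double induction, with outer variable $k$: I prove that $e_j \in I_k$ for every $j$ with $j \le n - k$, by induction on $k = 0, 1, 2, \dots$. Each inductive step rearranges Lemma~\ref{83111lem}(a) to
\[
e_j^p = (x'_{p^j} - y'_{p^j}) - p e_{j+1} - p \bigl(f_{p^j}(x) - f_{p^j}(y)\bigr) - \sum_{m=1}^{p-1} \binom{p}{m} y_{p^j}^{p-m} e_j^m,
\]
and checks that each summand on the right lies in $p I_{k-1}$: the first by the hypothesis $x'_{p^j} - y'_{p^j} \in p I_{n-j} \subseteq p I_{k-1}$; the second by the outer inductive bound on $e_{j+1}$ or the standing hypothesis $e_i \in I_{n-i}$; the third by the outer bound on $e_0,\dots,e_j$; and the fourth by the binomial $p$-factor combined with $e_j \in I_{k-1}$. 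This gives $e_j^p \in pI_{k-1}$, hence $e_j \in I_k$, and after finitely many steps yields the claim. Setting up this bootstrap correctly, so that the inductive hypothesis on the $e_\bullet$ at stage $k-1$ is just strong enough to kill the $f$-difference at stage $k$, is the main obstacle.

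Parts (c) and (d) are constructive. For (c), solve the $(i-1)$-th coordinate of $F(\underline{x}) = F(\underline{y})$ for $x_{p^i}$, giving
\[
x_{p^i} = y_{p^i} + \frac{y_{p^{i-1}}^p - x_{p^{i-1}}^p}{p} + \bigl(f_{p^{i-1}}(y) - f_{p^{i-1}}(x)\bigr);
\]
the hypothesis $x_{p^{i-1}} - y_{p^{i-1}} \in I_{n-i+1}$ makes the middle term divisible by $p$ and places the quotient in $I_{n-i}$ (by the same binomial expansion used in (a)), and telescoping in $e_k \in I_{n-k} \subseteq I_{n-i}$ for $k \le i-1$ shows the $f$-difference lies in $I_{n-i}$; the remaining coordinates of $F(\underline{x})$ and $F(\underline{y})$ match by the standing hypothesis (or vacuously for $i = 1$). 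For (d), iterate (c) inside $W(R)$: set $y_1 = 0 \in I_\infty$, and having constructed $y_1, \dots, y_{p^i} \in I_\infty$, choose $y_{p^{i+1}} \in I_\infty$ satisfying $p y_{p^{i+1}} = x_{p^i} - y_{p^i}^p - p f_{p^i}(y_1, \dots, y_{p^i})$. The right-hand side lies in $p I_\infty$ because $x_{p^i} \in p I_\infty$ by hypothesis, $y_{p^i}^p \in p I_\infty$ since $y_{p^i} \in I_\infty$, and every monomial of $f_{p^i}$ is a nonempty product of the $y$'s, each in the ideal $I_\infty$, so $f_{p^i}(y) \in I_\infty$.
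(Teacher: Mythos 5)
Your proposal is correct and follows essentially the same route as the paper: componentwise use of Lemma~\ref{83111lem}(a), the binomial expansion to control $x_{p^j}^p - y_{p^j}^p$, the defining property of the ideals $I_k$ as the bootstrap engine in (b), and the same explicit recursive constructions in (c) and (d) (with your ``division by $p$'' in (c) understood as choosing an element of $I_{n-i}$ witnessing membership in $pI_{n-i}$, exactly as the paper does). The only difference is organizational in (b): you run a single induction on $k$ with the $j$-dependent cutoff $j \le n-k$, whereas the paper runs an inner induction on $k$ capped at $I_{n-i+1}$ followed by an outer induction on $i$; these are the same argument.
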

\begin{proof}
To check (a), apply Lemma~\ref{83111lem} to write
$x'_{p^j}-y'_{p^j} = x_{p^j}^p - y_{p^j}^p + p(x_{p^{j+1}} - y_{p^{j+1}})
+ p(f_{p^j}(x_1,\dots,x_{p^j}) - f_{p^j}(y_1,\dots,y_{p^j}))$.  Writing $y_{p^j} = x_{p^j} - (x_{p^j} - y_{p^j})$, we
note that $x_{p^j}^p - y_{p^j}^p$ belongs to the ideal generated by $(x_{p^j} - y_{p^j})^p$ and $p(x_{p^j} - y_{p^j})$. 
Note also that $p(f_{p^j}(x_1,\dots,x_{p^j}) - f_{p^j}(y_1,\dots,y_{p^j}))$ belongs to the ideal generated by $p(x_1 - y_1), \dots, p(x_{p^{j}} - y_{p^{j}})$.  
It follows that $x'_{p^j}-y'_{p^j} \in p I_{n-j  - 1}$.

To check (b), we first check that under the hypotheses of (b), if there exists $0 \leq k \leq  n-i+1$ such that
$x_{p^j} - y_{p^j} \in I_{k}$ for $j=0,\dots,i$, then $x_{p^{j}} - y_{p^{j}} \in I_{k+1}$
for $j=0,\dots,i-1$. For $j \in \{0,\dots,i-1\}$, apply Lemma~\ref{83111lem} to write
\begin{align*}
(x_{p^{j}} - y_{p^{j}})^p - (x'_{p^j}-y'_{p^j}) &=
((x_{p^{j}} - y_{p^{j}})^p - x_{p^{j}}^p + y_{p^{j}}^p) \\
&-p (x_{p^{j+1}}-y_{p^{j+1}} + f_{p^{j}}(x_1,\dots,x_{p^{j}})
- f_{p^{j}}(y_1,\dots,y_{p^{j}})).
\end{align*}
{}From this equality, we see that $(x_{p^{j}} - y_{p^{j}})^p - (x'_{p^j}-y'_{p^j})$
belongs to the ideal generated by $p(x_1 - y_1), \dots, p(x_{p^{j+1}} - y_{p^{j+1}})$. 
This ideal is contained in $p I_k$ by hypothesis.  By assumption we also have $x'_{p^j}-y'_{p^j} \in pI_{n-j}$, and because $n - j \geq n - i + 1 \geq k$, we have $x'_{p^j}-y'_{p^j} \in pI_k$ as well.  Hence $(x_{p^j} - y_{p^j})^p \in pI_k$,  and
so we have $x_{p^{j}} - y_{p^{j}} \in I_{k+1}$ as claimed.

Note that the hypothesis of the previous paragraph is always satisfied for $k=0$ because $I_0 = R$.  The previous paragraph gives us control over the terms $x_1 - y_1, \ldots, x_{p^{i-1}}-y_{p^{i-1}}.$
Since $x_{p^i} - y_{p^i} \in I_{n-i}$ by assumption, we may induct on $k$ to deduce that 
$x_{p^j} - y_{p^j} \in I_{n-i+1}$ for $j=0,\dots,i-1$. In particular, $x_{p^{i-1}} - y_{p^{i-1}} \in I_{n-i+1}$;
we may now induct on $i$ to deduce (b).

To check (c), note that by Lemma~\ref{83111lem} again, it is sufficient to find $x_{p^i} \in B(y_{p^i}, I_{n-i})$
such that $x_{p^{i-1}}^p + p x_{p^{i}} + p f_{p^{i-1}}(x_1,\dots,x_{p^{i-1}}) = y_{p^{i-1}}^p + p y_{p^{i}} + p f_{p^{i-1}}(y_1,\dots,y_{p^{i-1}})$.
This is possible because $x_{p^{i-1}} - y_{p^{i-1}} \in I_{n-i+1}$ and $x_1-y_1,\dots,x_{p^{i-1}}-y_{p^{i-1}} \in I_{n-i}$, so
as in the proof of (a) we have
$x_{p^{i-1}}^p - y_{p^{i-1}}^p + p (f_{p^{i-1}}(x_1,\dots,x_{p^{i-1}}) -f_{p^{i-1}}(y_1,\dots,y_{p^{i-1}})) 
\in p I_{n-i}$.

To check (d), we construct the $y_{p^i}$ recursively, choosing $y_1 = 0$. Given
$y_1,\dots,y_{p^i}$, we must choose
$y_{p^{i+1}}$ so that in the notation of Lemma~\ref{83111lem}, we have
$y_{p^i}^p + p y_{p^{i+1}} + p f_{p^i}(y_1,\dots,y_{p^i}) = x_{p^i}$.
This is possible because $y_{p^i}^p$, $p f_{p^i}(y_1,\dots,y_{p^i})$, and $x_{p^i}$ all belong to
$p I_\infty$.
\end{proof}

\begin{Cor}
Let $R$ be a ring and let $n$ be either $\infty$ or a positive integer.
Then an element $r \in R$ occurs as the first component of an element
of the kernel of $F:W_{p^{n}}(R) \to W_{p^{n-1}}(R)$ if and only if $r \in I_n$.
\end{Cor}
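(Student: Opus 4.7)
The plan is to deduce both directions from Proposition~\ref{82411kerprop} applied with the reference element $\underline{y} = 0$: part~(c) will handle the \emph{if} direction, and part~(b) the \emph{only if} direction. This reduces the Corollary to essentially an indexing exercise on top of the proposition.

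For the \emph{if} direction, suppose $r \in I_n$. I will inductively build $\underline{x}$ level by level, starting with $x_1 := r$. At the base step $i = 1$, the hypothesis of (c) reduces to $x_1 - 0 = r \in I_n$, so (c) delivers some $x_p \in B(0, I_{n-1}) = I_{n-1}$ with $F(x_1, x_p) = 0$. Inductively, once I have $(x_1, \ldots, x_{p^{i-1}})$ with vanishing Frobenius and $x_{p^j} \in I_{n-j}$ for $j < i$, applying (c) at level $i$ against the zero tuple extends this to include some $x_{p^i} \in I_{n-i}$ while maintaining $F(x_1, \ldots, x_{p^i}) = 0$. For finite $n$ the recursion terminates at $i = n$ and yields the desired $\underline{x} \in W_{p^n}(R)$; for $n = \infty$ the extensions are compatible by construction and assemble into an element of the inverse limit $W(R)$ (with $x_{p^j} \in I_\infty$ for every $j$).

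For the \emph{only if} direction, suppose $F(\underline{x}) = 0$ with $x_1 = r$. For finite $n$ I will invoke part~(b) of the proposition with $\underline{y} = 0$ and with both parameters of the proposition (its $i$ and its $n$) set equal to the Corollary's $n$, so that the condition $x_{p^i} - y_{p^i} \in I_{n-i}$ degenerates to $I_0 = R$ and is automatic. Since $\underline{x}' = \underline{y}' = 0$, the ``in particular'' clause of (b) applies and gives $x_{p^j} \in I_{n-j}$ for all $j$; specializing to $j = 0$ yields $x_1 \in I_n$. For $n = \infty$, I will truncate $\underline{x}$ to each $W_{p^i}(R)$; the truncation is still killed by $F$ (its image is the corresponding truncation of $0$), so the finite case gives $x_1 \in I_i$ for every $i \geq 1$, and intersecting produces $x_1 \in I_\infty$.

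Given the strength of Proposition~\ref{82411kerprop}, there is no real obstacle. The only delicate point is carefully aligning the proposition's two parameters with the Corollary's single parameter $n$, and checking that passing to the inverse limit in the $n = \infty$ case is harmless in both directions; both are routine.
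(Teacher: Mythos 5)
Your proof is correct and follows essentially the same route as the paper's: both directions come from Proposition~\ref{82411kerprop}, using part~(c) iteratively with $\underline{y}=0$ for the ``if'' direction and part~(b) with the proposition's $i$ equal to $n$ (and truncation for $n=\infty$) for the ``only if'' direction. Your version is more explicit than the paper's terse wording about matching the proposition's two parameters and handling $n=\infty$, but the underlying argument is identical.
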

\begin{proof}
Suppose that $n < \infty$.
If $r = z_1$ for $\underline{z} \in W_{p^{n}}(R)$ such that $F(\underline{z}) = 0$,
then trivially $z_{p^n} \in I_0$. By Proposition~\ref{82411kerprop}(b),
$z_1 \in I_n$; the same conclusion holds for $n = \infty$.
Conversely, suppose $r \in I_n$. Put $z_1 = r$.
By Proposition~\ref{82411kerprop}(c) applied repeatedly,
for each positive integer $i \leq n$, we can find $z_{p^i} \in I_{n-i}$ so that
$F(z_1, z_p, \dots, z_{p^i}) = 0$. This proves the claim.
\end{proof}

\begin{Rmk}
The image under the ghost map of any element in the kernel of $F$ has the form $(*,0,0,\ldots)$.
If $R$ is a $p$-torsion-free ring, the ghost map is injective, so any element of the kernel of $F$ is uniquely determined by its first
component. In this case, we may combine Proposition~\ref{82411kerprop}(b) and (c) to deduce that
if $\underline{z} \in W_{p^i}(R)$ is such that $F(\underline{z}) = 0$
and $z_{1} \in I_{n}$ for some $n \geq i$, then $z_{p^j} \in I_{n-j}$ for $j=0,\dots,i$.
\end{Rmk}

\section{Surjectivity conditions} 

Surjectivity of the Witt vector Frobenius turns out to be a subtler property than injectivity, because
there are many partial forms of surjectivity which occur much more frequently than full surjectivity.
We first list a number of such conditions, then identify logical relationships among them.

\begin{Def}
For $R$ an arbitrary ring, label the conditions on $R$ as follows.
\begin{enumerate}[label=(\roman*)]
\item \label{surj} $F: W(R) \rightarrow W(R)$ is surjective.
\item \label{finlevsurj} $F: W_{p^n}(R) \rightarrow W_{p^{n-1}}(R)$ is surjective for all $n \geq 2$.
\item[\ref{finlevsurj}$'$] $F: W_{p^2}(R) \rightarrow W_p(R)$ is surjective.
\item \label{Teichdense} For every $\underline{x} \in W(R)$, there exists $r \in R$ such that $\underline{x} - [r] \in pW(R)$.
\item \label{Teichinimage} The image of $F: W(R) \rightarrow W(R)$ contains all Teichm\"uller elements $[r]$.
\item \label{Teichinimagefin} $F: W_{p^n}(R) \rightarrow W_{p^{n-1}}(R)$ contains all elements of the form $(r,0,\ldots,0)$ for all $n \geq 2$.
\item[\ref{Teichinimagefin}$'$] $F: W_{p^2}(R) \rightarrow W_p(R)$ contains all elements of the form $(r,0)$.
\item \label{Vinimage} The image of $F: W(R) \rightarrow W(R)$ contains $V(1)$.
\item \label{Vinimageall} For all $n \geq 2$, the image of $F: W_{p^{n}}(R) \rightarrow W_{p^{n-1}}(R)$ contains $V(1)$.
\item \label{Vinimagefin2} For all $n \geq 2$, the image of $F: W_{p^{n}}(R) \rightarrow W_{p^{n-1}}(R)$ contains $V^{n-1}(1)$. \item \label{Vinimagefin} The image of $F: W_{p^2}(R) \rightarrow W_p(R)$ contains $V(1)$. 
\item \label{lev1surj} $F^n: W_{p^n}(R) \rightarrow W_1(R)$ is surjective for all $n \geq 1$.
\item[\ref{lev1surj}$'$] $F: W_{p}(R) \rightarrow W_1(R)$ is surjective.
\item \label{pinvertible} $R$ contains $p^{-1}$.
\item \label{sphericallycomp} For any $r_0, r_1, \dots \in R$ such that $B(r_0, I_0) \supseteq B(r_1, I_1) \supseteq \cdots$
(in the notation of Definition \ref{BallDef}), the intersection $\cap_{i=0}^\infty B(r_i,I_i)$ is non-empty.
\item \label{pthroots} The $p$-th power map on $R/p I_\infty$ (which need not be a ring homomorphism)  is surjective.
\item \label{pthrootsmodfinite} For each $n \geq 1$, the $p$-th power map on $R/p I_n$ is surjective.
\item[\ref{pthrootsmodfinite}$'$] The $p$-th power map on $R/p I_1$ is surjective.
\item \label{prmodp^2} For every $r \in R$, there exists $s \in R$ such that $s^p \equiv pr \mod p^2R$.
\item \label{pmodp^2new} There exist $r,s$ in $R$ such that $r^p \equiv -p \mod ps R$ and $s \in I_1$.
\item \label{some power} There exist $r,s$ in $R$ such that $r^p \equiv -p \mod ps R$ and $s^N \in pR$ for some integer $N > 0$.
\item \label{pthrootsmodp} The Frobenius homomorphism $\varphi: \overline{r} \mapsto \overline{r}^p$ on $R/pR$ is surjective.
\end{enumerate}
\end{Def}

These conditions are represented graphically  in Figure~\ref{logical implications}.
Note that  conditions in the top left quadrant refer to infinite Witt vectors, conditions in
the top right quadrant refer to finite Witt vectors, and conditions below the dashed line
refer to $R$ itself.

\begin{figure}
\[ \hspace{-.6in} \xymatrix@R=.20in@C=.20in@M=0in{
& & & & & &\ar@{--}[dddd] & & & & &  &  & \\
& &  &\text{\ref{surj}} \ar@{}="surj" &  &
\text{\ref{Teichinimage}} \ar@{}="Teichinimage" & &
\text{\ref{Teichinimagefin}} \ar@{}="Teichinimagefin"&  &  &  &  &  &
\text{\ref{finlevsurj}} \ar@{}="finlevsurj" \\
& &  &  &  &  &  &  &  &  &  &  &  & \\
& &  & \text{\ref{Teichdense}} \ar@{}="Teichdense"&  &
\text{\ref{Vinimage}} \ar@{}="Vinimage"&  &\text{\ref{Vinimageall}} \ar@{}="Vinimageall"&  &
\text{\ref{Vinimagefin2}} \ar@{}="Vinimagefin2" &  & 
\text{\ref{Vinimagefin}} \ar@{}="Vinimagefin"&  & &  &
\text{\ref{lev1surj}} \ar@{}="lev1surj" &  \\
\ar@{--}[rrrrrrrrrrrrrrrr] & &  &  & & &  &  &  & &  &  &  &  &  &  & & \\
& \text{\ref{pinvertible}} \ar@{}="pinvertible"&  & 
\text{\ref{sphericallycomp}} \ar@{}="sphericallycomp" &   & 
\text{\ref{pthroots}} \ar@{}="pthroots"&& 
\text{\ref{pthrootsmodfinite}} \ar@{}="pthrootsmodfinite" &  &
\text{\ref{prmodp^2}} \ar@{}="prmodp^2"&  &
\text{\ref{pmodp^2new}} \ar@{}="pmodp^2new"&  &
\text{\ref{some power}} \ar@{}="some power"&  &
\text{\ref{pthrootsmodp}} \ar@{}="pthrootsmodp"
\ar  "surj" ; "Teichinimage"
\ar  "surj" ; "Teichdense"
\ar@(dl,ul)  "surj" ; "sphericallycomp"
\ar@{<->}  "finlevsurj" ; "Teichinimagefin"
\ar "finlevsurj" ; "lev1surj"
\ar "pthrootsmodfinite" ; "prmodp^2"
\ar "pmodp^2new" ; "some power"
\ar@(dl,ul)@{<->} "Teichinimagefin" ; "pthrootsmodfinite"
\ar "Teichinimage" ; "Teichinimagefin"
\ar "Vinimage" ; "Vinimageall"
\ar@(ur,ul) "Vinimageall" ; "Vinimagefin"
\ar "Vinimagefin2" ; "Vinimagefin"
\ar@{<->} "Vinimagefin" ; "pmodp^2new"
\ar@{<->} "lev1surj" ; "pthrootsmodp"
\ar "prmodp^2" ; "Vinimagefin2"
\ar "prmodp^2" ; "Vinimage"
\ar@/^1.5pc/@{<->} "pthroots" ; "Teichinimage"
\ar@/^1pc/@{.>} "lev1surj" ; "finlevsurj"
\ar@{.>} "some power" ; "finlevsurj"
\ar@(u,ul) "pinvertible" ; "surj"
\ar@/_1pc/@{=>} "Teichdense" ; "surj" 
\ar@/^1pc/@{=>} "Teichinimage" ; "surj" 
\ar@(l, l)@{:>} "sphericallycomp" ; "surj" 
\ar@(ul,ur)@{:>} "finlevsurj" ; "surj"
}
\]
\caption{Logical implications among conditions on the ring $R$.}
 \label{logical implications}
\end{figure}

\begin{Thm} \label{theorem on logical implications}
For any ring $R$, we have $\ref{finlevsurj} \Leftrightarrow \ref{finlevsurj}'$, $\ref{Teichinimagefin} \Leftrightarrow \ref{Teichinimagefin}'$,
$\ref{lev1surj} \Leftrightarrow \ref{lev1surj}'$, 
and $\ref{pthrootsmodfinite} \Leftrightarrow \ref{pthrootsmodfinite}'$.
In addition, each solid single arrow in Figure~\ref{logical implications} represents a direct implication,
and for each other arrow type, the conditions at the tails of the arrows of that type together imply the condition at the target.
\end{Thm}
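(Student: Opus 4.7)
My plan is to verify each arrow of Figure~\ref{logical implications} by unrolling the corresponding condition via the explicit Frobenius formula from Lemma~\ref{83111lem}(a), $y_{p^i} = x_{p^i}^p + px_{p^{i+1}} + pf_{p^i}(x_1,\ldots,x_{p^i})$; the refinements (b)--(e) of that lemma control the $f_{p^i}$ in arguments involving $V(1)$ and handle the prime $p=2$, while Proposition~\ref{82411kerprop}(c) supplies the freedom to modify a partial preimage within an $I_*$-coset without disturbing the lower ghost equations. I will proceed in three stages: the four primed-versus-unprimed equivalences; the single-tail arrows (grouped as trivial restrictions, biconditionals relating Witt-vector conditions to ring-theoretic ones, and direct polynomial computations); and finally the three multi-tail implications.

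For the primed equivalences, each forward direction is tautological and the reverse is induction on $n$. In each case, the base-level hypothesis ($\ref{finlevsurj}'$, $\ref{Teichinimagefin}'$, $\ref{lev1surj}'$, or $\ref{pthrootsmodfinite}'$) already encodes surjectivity of Frobenius mod $p$ on $R$, as is visible on applying Lemma~\ref{83111lem}(a) to the pair $(s_1, s_p)$. At each inductive step, the primed hypothesis produces a lift by one more component, and Proposition~\ref{82411kerprop}(c) reconciles residue conditions imposed at the new level against previously-chosen components. The biconditionals linking Witt-vector conditions to ring-theoretic ones---$\ref{Teichinimagefin} \Leftrightarrow \ref{pthrootsmodfinite}$, $\ref{lev1surj} \Leftrightarrow \ref{pthrootsmodp}$, $\ref{Vinimagefin} \Leftrightarrow \ref{pmodp^2new}$, $\ref{pthroots} \Leftrightarrow \ref{Teichinimage}$---are obtained by unrolling $F(\underline{s}) = [r]$ or $F(\underline{s}) = V(1)$ via Lemma~\ref{83111lem}(a) and (d), then reading off the surviving ring-theoretic content (with part (e) invoked when $p=2$). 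The remaining single-tail arrows are either trivial restrictions (e.g., $\ref{surj} \Rightarrow \ref{Teichinimage}$, $\ref{Vinimage} \Rightarrow \ref{Vinimageall}$) or short polynomial manipulations ($\ref{prmodp^2} \Rightarrow \ref{Vinimage}$, $\ref{pthrootsmodfinite} \Rightarrow \ref{prmodp^2}$, $\ref{pmodp^2new} \Rightarrow \ref{some power}$), together with the one-line implication $\ref{pinvertible} \Rightarrow \ref{surj}$ via the identity $p = FV$ (property (b) of the Witt vectors).

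The three multi-tail implications targeting \ref{surj} and \ref{finlevsurj} constitute the main obstacle. The cleanest is $\ref{Teichdense} \wedge \ref{Teichinimage} \Rightarrow \ref{surj}$: given $\underline{x} \in W(R)$, apply \ref{Teichdense} to write $\underline{x} = [r] + p\underline{x}'$, lift $[r] = F(\underline{s})$ via \ref{Teichinimage}, and observe $\underline{x} = F(\underline{s} + V(\underline{x}'))$ using $p\underline{x}' = F(V(\underline{x}'))$. For $\ref{sphericallycomp} \wedge \ref{finlevsurj} \Rightarrow \ref{surj}$, one builds preimages of the truncation of $\underline{x}$ at successive finite levels using \ref{finlevsurj}; Proposition~\ref{82411kerprop}(c) shows the $j$-th coordinate of the level-$n$ preimage lies in a ball $B(\cdot, I_{n-j})$ nested in the corresponding level-$(n-1)$ ball, and \ref{sphericallycomp} extracts a limit coordinate-by-coordinate. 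The hardest is $\ref{lev1surj} \wedge \ref{some power} \Rightarrow \ref{finlevsurj}$: here the integer $N$ from \ref{some power} provides a nilpotence bound $s^N \in pR$, which in combination with the congruence $r^p \equiv -p \bmod psR$ allows iterative lifting within any finite truncation $W_{p^n}(R)$, while \ref{lev1surj} (equivalent to Frobenius surjectivity mod $p$) supplies the needed lifts at each level. Throughout, the case $p=2$ forces the use of Lemma~\ref{83111lem}(e) in place of (c)--(d), introducing additional bookkeeping but no conceptual change.
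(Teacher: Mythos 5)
Your overall plan is reasonable for most of the single-tail arrows and for the implication $\ref{Teichdense} + \ref{Teichinimage} \Rightarrow \ref{surj}$, all of which match the paper's approach. But there are genuine gaps at the two hardest points, and they are related.

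The critical gap is your treatment of the primed--unprimed equivalences, specifically $\ref{finlevsurj}' \Rightarrow \ref{finlevsurj}$ (and its cousins). You propose a direct induction on $n$ in which ``the primed hypothesis produces a lift by one more component'' and Proposition~\ref{82411kerprop}(c) reconciles residues. This cannot work as stated. Unrolling Lemma~\ref{83111lem}(a), a preimage $(x_1,\dots,x_{p^n})$ of $(y_1,\dots,y_{p^{n-1}})$ requires $y_{p^{n-1}} \equiv x_{p^{n-1}}^p \bmod pR$; but $x_{p^{n-1}}$ is already constrained (up to $p$-torsion) by the level-$(n-1)$ equation $y_{p^{n-2}} = x_{p^{n-2}}^p + p x_{p^{n-1}} + p f_{p^{n-2}}(\ldots)$, so one has essentially no freedom to arrange the last congruence. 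Proposition~\ref{82411kerprop}(c) is a statement about \emph{matching} two Frobenius values when lower coordinates agree modulo $I_*$; it does not supply surjectivity. In the paper these equivalences are obtained by a chain: $\ref{pthrootsmodfinite}'$ yields $\ref{lev1surj}$ (via $\ref{pthrootsmodp}$) together with $\ref{some power}$, and then the nontrivial implication $\ref{lev1surj}+\ref{some power}\Rightarrow\ref{finlevsurj}$ closes the loop. In particular, $\ref{lev1surj}$ (equivalently $\ref{pthrootsmodp}$) alone does \emph{not} imply $\ref{finlevsurj}$ --- already for $R=\ZZ$ the Frobenius on $R/pR = \FF_p$ is surjective, yet $\ref{finlevsurj}$ fails --- so the nilpotence condition $\ref{some power}$ is essential, not bookkeeping. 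Your sketch of $\ref{lev1surj}+\ref{some power}\Rightarrow\ref{finlevsurj}$ as ``iterative lifting'' is too vague to be checked and does not capture the three structural steps in the paper: (1) use the geometric-series trick $s^p = pr(1-s_2^N)\equiv pr \bmod p^2$ to derive $\ref{prmodp^2}$; (2) from $\ref{prmodp^2}$ derive $\ref{Vinimagefin2}$, i.e.\ hit $V^{n-1}(1)$; (3) in the induction, take any $\underline{r}$ with $F(\underline{r})=\underline{y}'$ agreeing with $\underline{y}$ through level $n-1$, write $\underline{y}-\underline{y}' = V^n(F^{n+1}(\underline{x}''))$ using $\ref{lev1surj}$, take $\underline{x}'$ with $F(\underline{x}')=V^n([1])$, and use the Witt identity $V(\underline{a})\underline{b}=V(\underline{a}F(\underline{b}))$ to verify $F(\underline{r}+\underline{x}'\underline{x}'')=\underline{y}$. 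That multiplicative correction is the idea your proposal is missing.

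A smaller point: in $\ref{sphericallycomp}+\ref{finlevsurj}\Rightarrow\ref{surj}$, the nested-balls claim comes from Proposition~\ref{82411kerprop}(b), not (c), and the coordinate-wise intersection does not directly yield a preimage of $\underline{y}$: it gives $\underline{\widetilde{y}}$ agreeing with $\underline{y}$ only modulo $pI_\infty$ (via part (a)), and one must then invoke Proposition~\ref{82411kerprop}(d) to lift the discrepancy $\underline{y}-\underline{\widetilde{y}}$, which you omit.
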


The proof of Theorem~\ref{theorem on logical implications} will occupy the rest of this section.
First, however, we mention some consequences of Theorem~\ref{theorem on logical implications},
and some negative results which follow from some examples considered in 
Section~\ref{eg section}.
\begin{Cor}
For any ring $R$, we have the following equivalences.
\begin{itemize}
\item $\ref{surj} \Leftrightarrow \ref{finlevsurj} + \ref{sphericallycomp} \Leftrightarrow \ref{Teichdense} + \ref{Teichinimage} \Leftrightarrow \ref{Teichdense} + \ref{pthroots} $
\item $\ref{finlevsurj} \Leftrightarrow \ref{Teichinimagefin} \Leftrightarrow 
\ref{pthrootsmodfinite} \Leftrightarrow \Bigg\{\ref{lev1surj} \text{ or } \ref{pthrootsmodp}\Bigg\}  + \Bigg\{\parbox{8pc}{\ref{Vinimage}, \ref{Vinimageall}, \ref{Vinimagefin2}, \ref{Vinimagefin}, \ref{prmodp^2}, \ref{pmodp^2new}, \text{ or } \ref{some power}} \Bigg\} $
\end{itemize}
\end{Cor}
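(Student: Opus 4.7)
The plan is to deduce both equivalence chains directly from Theorem~\ref{theorem on logical implications} by tracing arrows in Figure~\ref{logical implications}; no further substantive work should be required.

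For the first chain, I would first collect the forward implications out of \ref{surj}: namely $\ref{surj} \Rightarrow \ref{Teichdense}$, $\ref{surj} \Rightarrow \ref{Teichinimage}$, $\ref{surj} \Rightarrow \ref{sphericallycomp}$, and, composing with $\ref{Teichinimage} \Rightarrow \ref{Teichinimagefin} \Leftrightarrow \ref{finlevsurj}$, also $\ref{surj} \Rightarrow \ref{finlevsurj}$. For the reverse direction, Theorem~\ref{theorem on logical implications} states that the conditions at the tails of the two colon-type arrows into \ref{surj} jointly imply \ref{surj}, which gives $\ref{finlevsurj} + \ref{sphericallycomp} \Rightarrow \ref{surj}$; likewise the two dashed-type arrows give $\ref{Teichdense} + \ref{Teichinimage} \Rightarrow \ref{surj}$. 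Finally, the bidirectional equivalence $\ref{Teichinimage} \Leftrightarrow \ref{pthroots}$ lets me replace \ref{Teichinimage} by \ref{pthroots} in the last formulation.

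For the second chain, the three equivalences $\ref{finlevsurj} \Leftrightarrow \ref{Teichinimagefin} \Leftrightarrow \ref{pthrootsmodfinite}$ are immediate from bidirectional solid arrows. For the forward direction of the last equivalence, \ref{finlevsurj} satisfies the first disjunction via $\ref{finlevsurj} \Rightarrow \ref{lev1surj}$, and satisfies the second via $\ref{pthrootsmodfinite} \Rightarrow \ref{prmodp^2}$. For the reverse direction, the bidirectional equivalence $\ref{lev1surj} \Leftrightarrow \ref{pthrootsmodp}$ reduces the task to showing $\ref{lev1surj} + Y \Rightarrow \ref{finlevsurj}$ for each $Y$ in the second group. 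The key observation is that every such $Y$ implies \ref{some power}: the chain $\ref{prmodp^2} \Rightarrow \ref{Vinimage} \Rightarrow \ref{Vinimageall} \Rightarrow \ref{Vinimagefin} \Leftrightarrow \ref{pmodp^2new} \Rightarrow \ref{some power}$ together with the separate arrow $\ref{Vinimagefin2} \Rightarrow \ref{Vinimagefin}$ covers all seven cases. Once one has $\ref{lev1surj} + \ref{some power}$, the two dotted-type arrows into \ref{finlevsurj} give the conclusion.

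The only thing requiring any care is verifying that each of the seven conditions in the large second group really does propagate down to \ref{some power}, but this amounts to following the arrows laid out in the figure. Beyond that bookkeeping there is no genuine obstacle; Theorem~\ref{theorem on logical implications} has already performed all the substantive work, and the corollary merely repackages the minimal hypotheses that characterize \ref{surj} and \ref{finlevsurj}.
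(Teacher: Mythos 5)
Your derivation is correct and is essentially the paper's intended argument: the corollary is stated as a direct consequence of Theorem~\ref{theorem on logical implications}, and your bookkeeping (using $\ref{Teichinimage} \Leftrightarrow \ref{pthroots}$, $\ref{lev1surj} \Leftrightarrow \ref{pthrootsmodp}$, the equivalence of the six finite-level conditions, funneling each of the seven conditions in the second group down to \ref{some power}, and then invoking $\ref{finlevsurj}+\ref{sphericallycomp} \Rightarrow \ref{surj}$, $\ref{Teichdense}+\ref{Teichinimage} \Rightarrow \ref{surj}$, and $\ref{lev1surj}+\ref{some power} \Rightarrow \ref{finlevsurj}$) matches exactly the implications established there. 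The only quibble is cosmetic: your names for the arrow styles in Figure~\ref{logical implications} are slightly off, but the implications you cite are the ones proved in the theorem, so nothing is affected.
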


\begin{Rmk}
The following implications fail to hold by virtue of the indicated examples.
\begin{itemize}
\item $\ref{surj} \not\Rightarrow \ref{pinvertible}$ by Example \ref{sphereeg}.
\item $\ref{finlevsurj} \not\Rightarrow \ref{surj}$ by Example \ref{OCpeg} (or from $\ref{finlevsurj} \not\Rightarrow \ref{Teichinimage}$ below).
\item $\ref{finlevsurj} \not\Rightarrow \ref{Teichdense}$ by Example \ref{OCpeg}.
\item $\ref{finlevsurj} \not\Rightarrow \ref{Teichinimage}$ by Example \ref{mupinftyeg}.
\item $\ref{finlevsurj} \not\Rightarrow \ref{sphericallycomp}$ by Example \ref{OCpeg}.
\item $\ref{Teichinimage} \not\Rightarrow \ref{surj}$ by Example \ref{OCpeg}.
\item $\ref{Vinimage} \not\Rightarrow \ref{prmodp^2}$ by Example \ref{mup2eg}.
\item $\ref{Vinimage} \not\Rightarrow \ref{pthrootsmodp}$ by Example \ref{mup2eg}.
\item $\ref{sphericallycomp} \not\Rightarrow \ref{some power}$ by Example \ref{ZZeg}.
\item $\ref{prmodp^2} \not\Rightarrow \ref{pthrootsmodp}$ by Example \ref{FpTeg}.
\item $\ref{pthrootsmodp} \not\Rightarrow \ref{some power}$ by Example \ref{ZZeg}.
\end{itemize}
\end{Rmk}

\begin{Rmk}
It seems that there should be some relationship between \ref{Teichdense} and \ref{sphericallycomp},
but we were unable to clarify this.
\end{Rmk}

\noindent \textbf{Proof of Theorem~\ref{theorem on logical implications}.}  We now prove the implications represented in 
Figure~\ref{logical implications}.

\begin{itemize}
\item $\ref{surj} \Rightarrow \ref{Teichinimage}$; $\ref{finlevsurj} \Rightarrow \ref{finlevsurj}'$; $\ref{finlevsurj} \Rightarrow \ref{Teichinimagefin}$; $\ref{finlevsurj}' \Rightarrow \ref{Teichinimagefin}'$;  $\ref{Teichinimagefin} \Rightarrow \ref{Teichinimagefin}'$;  $\ref{Vinimageall} \Rightarrow \ref{Vinimagefin}$; $\ref{Vinimagefin2} \Rightarrow \ref{Vinimagefin}$;  $\ref{lev1surj} \Rightarrow \ref{lev1surj}'$;
$\ref{pthrootsmodfinite} \Rightarrow \ref{pthrootsmodfinite}'$;
 $\ref{pmodp^2new} \Rightarrow \ref{some power}; \ref{Teichinimage} \Rightarrow \ref{Teichinimagefin}; \ref{Vinimage} \Rightarrow \ref{Vinimageall}; \ref{finlevsurj} \Rightarrow \ref{lev1surj}$
\begin{proof}
These are all obvious.
\end{proof}
\item $ \ref{surj} \Rightarrow \ref{Teichdense}$
\begin{proof}
Let $\underline{x} \in W(R)$ denote an arbitrary element.  We may write $\underline{x} = \sum V^i([x_{p^i}])$, and because $F \circ V = p$, we have $F(\underline{x}) \equiv [x_1^p] \mod pW(R)$.   Since we are assuming that $F$ is surjective, we deduce \ref{Teichdense}.
\end{proof}
\item $ \ref{surj} \Rightarrow \ref{sphericallycomp}$
\begin{proof}
Fix elements $r_i$ as in condition \ref{sphericallycomp}.  Our strategy is to define an element $\underline{y} \in W(R)$ in a special way so that if $\underline{x} \in W(R)$ is such that $F(\underline{x}) =  \underline{y}$, then we must have $x_1 \in \cap_{i=0}^{\infty} B(r_i, I_i)$.  To prescribe our element $\underline{y} \in W(R)$, it suffices to define compatible finite length Witt vectors $\underline{y}^{(p^i)} \in W_{p^i}(R)$ for every $i$.   

Define $\underline{x}^{(p)} \in W_p(R)$ by $\underline{x}^{(p)} = (r_1,0)$ (the second component does not matter).  Set $\underline{y}^{(1)} := F(\underline{x}^{(p)})$.   Now inductively assume we have defined $\underline{x}^{(p^i)} \in W_{p^i}(R)$ for some $i \geq 1$ and with first component $x^{(p^i)}_1 = r_i$.  By Proposition \ref{82411kerprop}(c), we can find an element $\underline{z}^{(p^i)} \in W_{p^i}(R)$ with $z^{(p^i)}_1 = r_{i+1} - r_i \in I_i$ and with $F(\underline{z}^{(p^i)}) = 0$.  Then let $\underline{x}^{(p^{i+1})} \in W_{p^{i+1}}(R)$ denote any element which restricts to $\underline{x}^{(p^i)} + \underline{z}^{(p^i)} \in W_{p^{i}}(R)$.  Set $\underline{y}^{(p^i)} = F(\underline{x}^{(p^i)})$.  Then by construction our elements $\underline{y}^{(p^i)}$ correspond to an element of $\varprojlim W_{p^{i}}(R) \cong W(R)$, which we call $\underline{y}$.

By \ref{surj}, we can find an element $\underline{x}$ such that $F(\underline{x}) = \underline{y}$.  Because $F(\underline{x})$ and $F(\underline{x}^{(p^{i+1})})$ have the same initial $i+1$ components, we have that $x_1 \equiv \underline{x}^{(p^{i+1})}_1 \mod I_{i+1}$.  Because $x_1$ does not depend on $i$, and $x_1^{(p^{i+1})} = r_{i+1}$, we have that $x_1 \in \cap_{i=0}^{\infty} B(r_{i+1},I_{i+1})$, as desired.  
\end{proof}

\item $ \ref{finlevsurj} + \ref{sphericallycomp} \Rightarrow \ref{surj}$
\begin{proof}
Choose any $\underline{y} \in W(R)$. We will construct $\underline{x} \in W(R)$ such that $F(\underline{x}) = \underline{y}$.  We use \ref{finlevsurj} to find elements $\underline{x}^{(1)}, \underline{x}^{(p)}, \ldots  \in W(R)$ so that $F(\underline{x}^{(1)}) = (y_1, *, *,\cdots), F(\underline{x}^{(p)}) = (y_1,y_p,*,*,\cdots),$ and so on. By \ref{sphericallycomp} and Proposition~\ref{82411kerprop}(b),
we may choose $\widetilde{x_{p^j}}$ in the intersection $B_0(x^{(p^j)}_{p^j}, I_0) \cap B_1(x^{(p^{j+1)})}_{p^j}, I_1) \cap \cdots$.
Put $\underline{\widetilde{y}} := F(\widetilde{x_1},\widetilde{x_{p}}, \dots)$.
We first apply Proposition~\ref{82411kerprop}(a) to $(\widetilde{x_1},\dots,\widetilde{x_{p^{i+1}}})$
and $(x^{(p^k)}_1,\dots,x^{(p^k)}_{p^{i+1}})$ for fixed $i$ and increasing $k$, which implies that
$\widetilde{y_{p^i}} - y_{p^i} \in pI_\infty$ for each nonnegative integer $i$.
This means that $\underline{y}$ and $\widetilde{y}$ have the same image in $W(R/pI_\infty)$,
so the difference $\underline{z} = \underline{y} - \underline{\widetilde{y}}$ has all of its components in $pI_\infty$.
By Proposition~\ref{82411kerprop}(d), $\underline{z}$ is in the image of $F$, as then is $\underline{y}$.
\end{proof}
\item $\ref{Teichdense}  + \ref{Teichinimage}\Rightarrow \ref{surj}$
\begin{proof}
This is obvious, given that any element $p\underline{x}' = F(V(\underline{x}'))$ is in the image of Frobenius.
\end{proof}
\item $\ref{Teichinimage} \Rightarrow \ref{pthroots}$; $\ref{Teichinimagefin} \Rightarrow \ref{pthrootsmodfinite}$; $\ref{Teichinimagefin}' \Rightarrow \ref{pthrootsmodfinite}'$
\begin{proof}
Suppose that $n \geq 2$ and that $\underline{x} \in W_{p^{n}}(R)$ and $r \in R$
satisfy $F(\underline{x}) = [r]$.
For each of $k = 0, \dots, n-1$, we check that $x_p, x_{p^2}, \ldots, x_{p^{n-k}}$ belong to $I_k$. This is clear for $k = 0$.
Given the claim for some $k < n-1$, for $i = 1, \dots, n-1-k$ we may apply Lemma~\ref{83111lem} to deduce that
$x_{p^i}^p + p x_{p^{i+1}} + p f_{p^i}(x_1, ..., x_{p^i}) = 0$.
By Lemma~\ref{83111lem}$'$, $f_{p^i}$ contains no pure power of $x_1^{p^{i+1}}$, so
$f_{p^i}(x_1, \dots, x_{p^i})$ belongs to the ideal generated by $x_p, \dots, x_{p^i}$.
Therefore $-p x_{p^{i+1}}$ and $-p f_{p^i}(x_1, \dots, x_{p^i})$ both belong to $pI_k$,
and so $x_{p^i}$ belongs to $I_{k+1}$.
This completes the proof; as a corollary, we observe that $x_p \in I_{n-1}$, and so $r - x_1^p = px_p \in pI_{n-1}$.  The stated implications now follow.
\end{proof}

\item $ \ref{Vinimagefin} \Rightarrow \ref{pmodp^2new}$
\begin{proof}
We are assuming that we can find $\underline{x}$ such that $F(\underline{x}) = V([1])$.  Then the ghost components of $\underline{x}$ must be $(*,0,p)$.  In other words, $x_1^p + px_p = 0$ and $x_1^{p^2} + px_p^p + p^2 x_{p^2} = p$.  The first equality tells us that $x_1^p \in pR$ (and hence $x_1^{p^2} \in p^pR$).  The second equality now tells us $px_p^p \equiv p \mod p^2R$ and so $x_p^p \equiv 1 \mod pR$.  Write $x_p = 1 + s$.  (We are not yet making any claims on $s$ except that $s \in R$.)  Raising both sides to the $p$-th power, we have $x_p^p = 1 + s^p + t$, where $t \in pR$ by the binomial theorem.  On the other hand, we decided above that $x_p^p - 1 \in pR$ as well, so in turn we have $s^p \in pR$.  Returning to the $p$-th ghost component equation $x_1^p + px_p = 0$, we now have $x_1^p \equiv -p \mod psR$ where $s^p \in pR$, as required.  
\end{proof}
\item $ \ref{lev1surj}' \Rightarrow \ref{pthrootsmodp}$; $\ref{pinvertible} \Rightarrow \ref{surj}$
\begin{proof}
Working with ghost components as above, these are obvious.
\end{proof}

\item $\ref{pthroots} \Rightarrow \ref{Teichinimage}$
\begin{proof}
Given $r \in R$, by \ref{pthroots} we may choose $x_1 \in R$, $x_p \in I_\infty$ for which $r = x_1^p + p x_p$.
We now show that we can choose $x_{p^2}, x_{p^3}, \dots \in I_\infty$ so that $F(x_1, \dots, x_{p^n}) = (r, 0, \dots, 0)$
for each $n \geq 1$. 

Given $x_1, \dots, x_{p^n}$, define $f_{p^n}$ as in Lemma~\ref{83111lem}.
By Lemma~\ref{83111lem}$'$, $f_{p^n}$ contains no pure power of $x_1^{p^{n+1}}$, so
$f_{p^n}(x_1, \dots, x_{p^n})$ belongs to the ideal generated by $x_p, \dots, x_{p^n}$, which by construction
is contained in $I_\infty$. It follows that $- x_{p^n}^p - f_{p^n}(x_1, \dots, x_{p^n}) \in p I_\infty$,
so we can find $x_{p^{n+1}} \in pI_\infty$ for which $x_{p^n}^p + p x_{p^{n+1}} + f_{p^n}(x_1, \dots, x_{p^n}) = 0$.
By Lemma~\ref{83111lem}, this choice of $x_{p^{n+1}}$ has the desired effect.
\end{proof}
\item $\ref{prmodp^2} \Rightarrow \ref{Vinimage}$
\begin{proof}
We wish to produce elements $x_1,x_p,\ldots$ of $R$ such that $F(x_1,x_p,\ldots) = (0,1,0,0,\ldots) = V(1)$.  
Using \ref{prmodp^2}, choose $r$ so that $r^p \equiv -p \mod p^2$.  Set $x_1 := r$.  Then clearly we can choose $x_p \equiv 1 \mod p$ such that $F(x_1,x_p) = (0)$. 
Next, in the notation of Lemma \ref{83111lem}, we wish to choose $x_{p^2}$ so that  \[x_p^p + px_{p^2} + pf_p(x_1,x_p)  = 1.\] We also wish to ensure that if $p > 2$, then $x_{p^2} \equiv 0 \mod{p}$, while if $p = 2$, then
$x_{p^2} = 1 \mod{p}$.
To see that this is possible,
we first observe that $x_p^p \equiv 1 \mod{p^2}$. We then note that $f_p(x_1,x_p)$ consists of an element of the ideal
generated by $x_1^p$ (which is a multiple of $p$) plus some constant times $x_p^p$. By Lemma \ref{83111lem}$'$,
if $p > 2$ this constant is divisible by $p$, so $p f_p(x_1,x_p) \equiv 0 \mod{p^2}$. If $p=2$,
this constant is $-1 \mod{2}$, so $p f_p(x_1, x_p) \equiv -2 \pmod{p^2}$. In either case, we obtain $x_{p^2}$ of the desired form.

Now assume that for some $i \geq 2$, we have found $x_1,x_p,\ldots,x_{p^i}$ such that $x_{p^j} \equiv 0 \mod p$ for $j \geq 3$ and such that $F(x_1,x_p,\ldots,x_{p^i}) = (0,1,0,\ldots,0)$.  We then claim that we can find $x_{p^{i+1}} \equiv 0 \mod p$ such that $F(x_1,x_p,\ldots,x_{p^{i+1}}) = (0,(-1)^{p-1},0,\ldots,0)$.  We wish to find $x_{p^{i+1}} \equiv 0 \mod p$ such that 
\[x_{p^i}^p + px_{p^{i+1}} + pf_{p^{i}}(x_1,\ldots,x_p) = 0 \]  with $f_{p^i}(x_1,\ldots,x_p) \equiv 0 \mod p$.  This again follows by Lemma \ref{83111lem}$'$.
\end{proof}
\item $ \ref{prmodp^2}\Rightarrow \ref{Vinimagefin2}$
\begin{proof}
Our goal is to find an element $\underline{x} = (x_1,x_p,\ldots,x_{p^{n}})$ such that $F(\underline{x}) = V^{n-1}(1).$  Ignoring $x_{p^{n}}$ temporarily, we will first find preliminary values for $x_{p^{n-1}},\ldots,x_1$ (in that order), then we will find the actual values for $x_1,\ldots, x_{p^{n}}$ (in that order).  We will write the preliminary values as $\widetilde{x_{p^i}}$.  

Set $\widetilde{x_{p^{n-1}}} = 1$, and then find $\widetilde{x_{p^{n-2}}},\ldots,\widetilde{x_{1}}$ (in that order) such that $\widetilde{x_{p^i}}^p \equiv -p\widetilde{x_{p^{i+1}}} \mod p^2R$.  This is possible by \ref{prmodp^2}.  Note that $\widetilde{x_{p^i}}^p \in pR$ for $0 \leq i \leq n-2$.
Now we will find the actual values $x_1, \ldots, x_{p^{n}}$.  Set $x_1 := \widetilde{x_1}$.   Assume we have found $x_1, \ldots, x_{p^i}$ with $x_{p^j} \equiv \widetilde{x_{p^j}} \mod pR$ for some $i \leq n-2$.  Using the notation of Lemma \ref{83111lem}, we must choose $x_{p^{i+1}}$ such that $x_{p^{i}}^p + px_{p^{i+1}} + pf_{p^{i}}(x_1,\ldots,x_{p^i}) = 0$. Write $x_{p^{i+1}} = \widetilde{x_{p^{i+1}}} + p y_{p^{i+1}}$.  We must choose $y_{p^{i+1}}$ so that 
\[
x_{p^i}^p + p \widetilde{x_{p^{i+1}}} + p^2 y_{p^{i+1}} + pf_{p^i}(x_1,\ldots,x_{p^i}) = 0.
\]
Because $\widetilde{x_{p^i}}^p + p\widetilde{x_{p^{i+1}}} \equiv 0 \mod p^2$ and $\widetilde{x_{p^i}} \equiv x_{p^i} \mod p$, we have that 
$x_{p^i}^p + p \widetilde{x_{p^{i+1}}} \equiv 0 \mod p^2.$  We further have that
$pf_{p^i}(x_1,\ldots,x_{p^i}) \equiv 0 \mod p^2;$
this follows from the homogeneity result in Lemma \ref{83111lem} and the fact that $x_{p^j}^p \equiv 0 \mod p$ for all $j$.  This shows that we can find the required $y_{p^{i+1}}$.

In this way we can construct the components $x_1,\ldots,x_{p^{n-1}}$. Finding the last component $x_{p^{n}}$ is a little different, because the last component of $V^{n-1}(1)$ is $1$ instead of $0$.   This means that we need \[x_{p^{n-1}}^p + px_{p^{n}} + pf_{p^{n-1}}(x_1,\ldots,x_{p^{n-1}}) = 1. \]   But this is easy, because we know $x_{p^{n-1}} \equiv 1 \mod p$.
\end{proof}
\item $\ref{pmodp^2new} \Rightarrow \ref{Vinimagefin}$
\begin{proof}
By Lemma \ref{83111lem}, we must find $x_1,x_p,x_{p^2}$ such that $x_1^p + px_p = 0$ and $x_p^p + px_{p^2} + pf(x_1,x_2) = 1$.
By \ref{pmodp^2new}, we can find an element $x_1$ such that $x_1^p + p + psr = 0$ where $s^p \in pR$.  Thus we choose that element for $x_1$, and we choose $x_p = 1+sr$.  It's then clear that $x_p^p + pf(x_1,x_2) \equiv 1 \mod pR$, and so we can find $x_{p^2}$ forcing $x_p^p + px_{p^2} + pf(x_1,x_2) = 1$, as desired.
\end{proof}
\item $ \ref{pthrootsmodp} \Rightarrow \ref{lev1surj}$
\begin{proof}
For any $r \in R$, we must find $r_1, \ldots, r_{p^{n}}$ such that $\sum_{i = 0}^{n} p^i r_{p^i}^{p^{n-i}} = r$.  We first find $r_1,s$ such that $r - r_1^{p^{n}}= ps$ by repeatedly applying \ref{pthrootsmodp}.  To find the remaining $r_{p^i}$, we apply the induction hypothesis to $s$.  
\end{proof}
\item $\ref{lev1surj} \Rightarrow \ref{pthrootsmodp}$; $\ref{lev1surj}' \Rightarrow \ref{lev1surj}$
\begin{proof}
We have already seen $ \ref{lev1surj}' \Rightarrow \ref{pthrootsmodp}$.  The two results follow because we have also shown $\ref{lev1surj} \Rightarrow \ref{lev1surj}'$ and $\ref{pthrootsmodp} \Rightarrow \ref{lev1surj}$.
\end{proof}
\item $\ref{lev1surj} + \ref{some power} \Rightarrow \ref{prmodp^2}$
\begin{proof}
By \ref{some power}, we can find $s_1,s_2 \in R$ for which $s_1^p = -p(1-s_2)$ and $s_2^N \in (p)$ for some $N > 0$.  We know that $\ref{lev1surj} \Rightarrow \ref{lev1surj}' \Rightarrow \ref{pthrootsmodp}$.
Given any $r \in R$, by \ref{pthrootsmodp} we can find
$s_3 \in R$ for which $s_3^p \equiv -r (1 + s_2 + \cdots + s_2^{N-1}) \mod{p}$. Since $s_2^N \equiv 0 \mod{p}$,
for $s = s_1 s_3$ we have $s^p = pr(1-s_2)(1+s_2 + \cdots + s_2^{N-1}) = pr(1-s_2^N) \equiv pr \mod{p^2}$.
\end{proof}
\item $\ref{lev1surj} + \ref{some power} \Rightarrow \ref{finlevsurj}$
\begin{proof}
We just saw that $\ref{lev1surj} + \ref{some power} \Rightarrow \ref{prmodp^2}$, and we also know that $\ref{prmodp^2} \Rightarrow  \ref{Vinimagefin2}$.  We will thus use \ref{Vinimagefin2} freely below.

We prove that $F: W_{p^n}(R) \rightarrow W_{p^{n-1}}(R)$ is surjective for $n \geq 1$ by induction on $n$. The base case $n = 1$ is exactly 
\ref{lev1surj}$'$.  Now assume the result for some fixed $n-1$, pick any $\underline{y} \in W_{p^n}(R)$, and consider the diagram 
$$\xymatrix{
 W_{p^{n+1}}(R) \ni \underline{r} \ar[dr]^{\text{res}} \ar[r]^F & {}\underline{y}' \in W_{p^n}(R) \ar[dr]^{\text{res}} & {}\underline{y}\in W_{p^n}(R) \ar[d]^{\text{res}}    \\
 & W_{p^n}(R) \ni \underline{s} \ar[r]^F & {}\underline{y}|_{W_{p^{n-1}}(R)}.
}$$
The term $\underline{s}$ exists by our inductive hypothesis and the term $\underline{r}$ exists because restriction maps are surjective.
If we had $\underline{y} = \underline{y}'$, we would be done.

Find $\underline{x}' \in W_{p^{n+1}}(R)$ with $F(\underline{x}') = V^n([1])$ using \ref{Vinimagefin2}.  Then find $\underline{x}'' \in W_{p^{n+1}}(R)$ with $\underline{y}-\underline{y}' = V^n(F^{n+1}(\underline{x}''))$ using \ref{lev1surj}.  Then a calculation shows 
$F\left(\underline{r} + \underline{x}'\underline{x}''\right)  = 
\underline{y},$ as desired.
\end{proof}
\item $\ref{pthrootsmodfinite}' \Rightarrow \ref{lev1surj} + \ref{some power}$; $\ref{pthrootsmodfinite} \Rightarrow \ref{prmodp^2}$
\begin{proof}
These are compositions of implications we have already proved.
\end{proof}

\item $\ref{finlevsurj}' \Rightarrow \ref{finlevsurj}; \ref{Teichinimagefin} \Rightarrow \ref{finlevsurj}; \ref{Teichinimagefin}' \Rightarrow \ref{Teichinimagefin}; \ref{pthrootsmodfinite} \Rightarrow \ref{Teichinimagefin}; \ref{pthrootsmodfinite}' \Rightarrow \ref{pthrootsmodfinite}$ 
\begin{proof}
We will prove that all six conditions appearing in the statement are equivalent.  We have already proven the following implications:
\[\xymatrix@R=.20in@C=.20in{
\text{\ref{finlevsurj}} \ar[r] \ar[d] & \text{\ref{Teichinimagefin}} \ar[r]\ar[d] & \text{\ref{pthrootsmodfinite}} \ar[d] \\
\text{\ref{finlevsurj}}' \ar[r] & \text{\ref{Teichinimagefin}}' \ar[r] & \text{\ref{pthrootsmodfinite}}'.
}\]
Thus, it suffices to prove that \ref{pthrootsmodfinite}$'$ implies \ref{finlevsurj}.  This follows because we have seen above that $\ref{pthrootsmodfinite}' \Rightarrow \ref{lev1surj} + \ref{some power} \Rightarrow \ref{finlevsurj}$.
\end{proof}
\end{itemize}

\section{Valuation rings}

Throughout this section, we assume that $R$ is a valuation ring with valuation $v$, in which $p$ is nonzero. 
In several cases, we also assume that $v$ is a real valuation.
This includes a number of the examples considered in Section \ref{eg section}.

\begin{Rmk} \label{In for valuation ring}
Suppose that $v(p)$ is $p$-divisible in the value group of $v$. 
Then for each nonnegative integer $n$, the ideal $I_n$ is principal,
generated by any $x \in R$ such that $v(x) = \left(\frac{1}{p} + \cdots + \frac{1}{p^n} \right) v(p)$.
If moreover 
$v$ is a real valuation and there exists $y \in R$ such that $v(y) = \frac{1}{p-1}v(p)$, then
$I_\infty$ is the principal ideal generated by $y$.
\end{Rmk}

\begin{Rmk}	
Condition \ref{sphericallycomp} holds whenever $v$ is a real valuation, $v(p)$ is $p$-divisible
(so the $I_n$ are as computed in Remark~\ref{In for valuation ring}), and 
$R$ is \emph{spherically complete} (i.e., any decreasing sequence of balls in $R$
has nonempty intersection). The spherically complete condition is in practice quite rare;
for instance, an infinite algebraic extension of $\Qp$ which is not discretely valued is never spherically complete.
As a result, \ref{sphericallycomp} is also rather rare, as then is \ref{surj}; see 
Example~\ref{OCpeg}.
\end{Rmk}

\begin{Rmk} \label{finlevsurj for valuation rings}
Condition \ref{finlevsurj} implies \ref{pthrootsmodp} (the Frobenius homomorphism on $R/pR$ is surjective)
and that there exists an element $x \in R$ with $0 < v(x) < v(p)$ (e.g., by \ref{pmodp^2new}). The converse is also true, as follows. By \ref{pthrootsmodp}, there exist $y,z \in R$ with $y^p \equiv x \mod{p}$, $z^p \equiv p/x \mod{pR}$.
Since $0 < v(x), v(p/x) < v(p)$, we have $v(y) = \frac{1}{p} v(x), v(z) = \frac{1}{p} (v(p) - v(x))$,
so $v(yz) = \frac{1}{p}$. Therefore, $u := (yz)^p/p$ is a unit in $R$. By \ref{pthrootsmodp} again, there exists $v \in R$ such that $v^p \equiv -u^{-1} \mod{pR}$.  Thus we have $puv^p \equiv -p \mod p^2R$.  Thus $(yzv)^p \equiv -p \mod{p^2R}$. This implies
\ref{pmodp^2new}, which together with \ref{pthrootsmodp} implies \ref{finlevsurj}.
As a byproduct of the argument, we note that \ref{finlevsurj} implies that $v(p)$ is $p$-divisible.
\end{Rmk}

\begin{Rmk}
For valuation rings, \ref{prmodp^2} implies \ref{finlevsurj}, and so the two conditions become equivalent.
To see this, note that if $R$ satisfies \ref{prmodp^2}, we can find $r_1$ such that $r_1^p = -p \mod{p^2R}$, and in particular, $r_1^p = -pu$ for some unit $u \in R$.  By \ref{prmodp^2} again, for any $x \in R$ we may find $r_2 \in R$ with $r_2^p = -pxu + p^2y$, with $y \in R$ and $u$ as above.   Since $p v(r_1) = v(-p) \leq v(-px) = p v(r_2)$, we have that $r_2/r_1$ is an element of $R$. We then compute $\left( \frac{r_2}{r_1}\right)^p = \frac{-pxu + p^2y}{-pu} = x-pu^{-1}y \equiv x \mod p$.  Hence \ref{pthrootsmodp} holds;
since \ref{prmodp^2} also implies \ref{some power}, we may deduce \ref{finlevsurj} as desired.
\end{Rmk}

\begin{Rmk}
If $R$ satisfies condition \ref{finlevsurj}, then it satisfies almost purity; see 
Section~\ref{section almost purity}. Thus if $S$ is the integral closure of $R$ in a finite
extension of $\mathrm{Frac}(R)$, then the maximal ideal of $S$ surjects onto the maximal ideal of $R$ 
under the trace map. In other words, $R$ is \emph{deeply ramified} in
the sense of Coates and Greenberg \cite{CG96}.
\end{Rmk}

\section{Examples} 
\label{eg section}

We now describe some simple examples realizing distinct subsets of the conditions considered above.

\begin{Eg} \label{pinveg}
Take $R$ to be any ring in which $p$ is invertible.  Then by Theorem~\ref{theorem on logical implications}, all of our conditions hold.  
\end{Eg}

\begin{Eg} \label{ZZeg}
Take $R = \ZZ$.  In this case, $I_i = (p)$ for all $i \geq 1$.
Thus \ref{some power} fails, and consequently, neither \ref{surj} nor \ref{finlevsurj} holds for $R = \ZZ$.
On the other hand, \ref{sphericallycomp} does hold for $R = \ZZ$. To see this,  we must show that any descending chain of balls $\cdots \supseteq B(r_{i-1},(p)) \supseteq B(r_i,(p)) \supseteq \cdots$ has nonempty intersection, which is clear.
\end{Eg}

\begin{Eg} \label{FpTeg}
Take $R = \FF_p[T]$.  In this case, \ref{prmodp^2} is satisfied trivially, because $pr = 0$ for all $r \in R$.  On the other hand, \ref{pthrootsmodp} is not satisfied.  
\end{Eg}

\begin{Eg} \label{OCpeg}
Take $R = \OCp$. Then \ref{pthroots} holds because $R$ is integrally closed in the algebraically
closed field $\Cp$); this implies that $R$ satisfies 
\ref{Teichinimage},
\ref{Vinimage},
\ref{Teichinimagefin},
\ref{Vinimageall}, \ref{Vinimagefin2},
\ref{Vinimagefin},
\ref{lev1surj},
\ref{pthroots},
\ref{pthrootsmodfinite},
\ref{prmodp^2},
\ref{pmodp^2new},
\ref{some power}, \ref{pthrootsmodp}.
On the other hand, \ref{sphericallycomp} does not hold by Lemma~\ref{OCp lemma}, so
$R$ does not satisfy \ref{surj}, \ref{Teichdense}, \ref{pinvertible}, \ref{sphericallycomp}.
\end{Eg}

\begin{Lem} \label{OCp lemma}
The ring $R = \OCp$ does not satisfy \ref{sphericallycomp}.
\end{Lem}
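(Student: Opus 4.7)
The plan is to exploit the separability of $\OCp$ (it is the completion of the countable field $\overline{\Qp}$) in order to construct a decreasing sequence of balls with empty intersection via a diagonal argument that excludes each element of a countable dense subset. By the earlier remark on $I_n$ for valuation rings, $I_n = \{x \in \OCp : v(x) \geq \rho_n\}$, where $\rho_n := v(p)\sum_{k=1}^n p^{-k}$ strictly increases to $\rho_\infty := v(p)/(p-1)$; in particular $\rho_N < \rho_\infty$ for every finite $N$.

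After enumerating $\overline{\Zp} = \{\beta_1, \beta_2, \ldots\}$, I would construct a sequence $\alpha_1, \alpha_2, \ldots \in \overline{\Zp}$ by setting $\alpha_1 = 0$ and $\alpha_n = \alpha_{n-1} + \gamma_n$, where $\gamma_n \in \overline{\Zp}$ has $v(\gamma_n) = \rho_{n-1}$. Since $\gamma_n \in I_{n-1}$, the balls nest: $B(\alpha_n, I_n) \subseteq B(\alpha_{n-1}, I_{n-1})$. The key additional condition to enforce at stage $n$ is $v(\alpha_n - \beta_n) < \rho_n$, which excludes $\beta_n$ from $B(\alpha_m, I_m)$ for all $m \geq n$. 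If $v(\alpha_{n-1} - \beta_n) \neq \rho_{n-1}$, any $\gamma_n$ with $v(\gamma_n) = \rho_{n-1}$ works by the ultrametric inequality, since then $v(\alpha_n - \beta_n) = \min(v(\alpha_{n-1}-\beta_n), \rho_{n-1}) < \rho_n$. In the borderline case $v(\alpha_{n-1} - \beta_n) = \rho_{n-1}$, I would fix $\pi \in \overline{\Zp}$ with $v(\pi) = \rho_{n-1}$ (available since $\overline{\Qp}$ has value group $\QQ$) and choose $\gamma_n = c\pi$ for a unit $c$ whose residue in $\overline{\FF}_p$ differs from that of $-(\alpha_{n-1}-\beta_n)/\pi$; this prevents cancellation of leading terms and keeps $v(\alpha_n - \beta_n) = \rho_{n-1} < \rho_n$.

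To conclude that $\bigcap_n B(\alpha_n, I_n) = \emptyset$, I would argue by contradiction: if $\alpha \in \OCp$ lay in the intersection, then $v(\alpha - \alpha_n) \geq \rho_n$ for all $n$, and by density of $\overline{\Zp}$ in $\OCp$ there would exist $\beta \in \overline{\Zp}$ with $v(\alpha - \beta) > \rho_\infty$; writing $\beta = \beta_N$, the ultrametric inequality yields $v(\beta_N - \alpha_N) \geq \min(v(\beta_N - \alpha), v(\alpha - \alpha_N)) \geq \rho_N$, contradicting the construction. The main obstacle is the careful handling of the borderline case in the inductive step, where the infinitude of the residue field $\overline{\FF}_p$ is essential; the remainder is routine ultrametric bookkeeping.
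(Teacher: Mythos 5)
Your argument is correct, but it proves Lemma~\ref{OCp lemma} by a genuinely different mechanism than the paper. The paper's proof is a cardinality argument: every nonempty intersection of a nested chain $B(r_0,I_0) \supseteq B(r_1,I_1) \supseteq \cdots$ is a ball $B(r,I_\infty)$, such balls each contain an algebraic number and are pairwise disjoint or equal, so there are only countably many of them; exhibiting uncountably many chains with pairwise disjoint candidate intersections (via Teichm\"uller-coefficient expansions) then forces some intersection to be empty, without identifying which one. You instead run a diagonalization against a fixed enumeration of $\overline{\Zp}$: at stage $n$ you translate the center by an element of exact valuation $\rho_{n-1}$, handling the borderline case by adjusting the leading residue so that $v(\alpha_n-\beta_n)<\rho_n$, and then density of $\overline{\Zp}$ in $\OCp$ shows any point of the intersection would be $\rho_\infty$-close to some $\beta_N$, contradicting the stage-$N$ exclusion. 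All the ultrametric steps check out (the identification $I_n=\{x: v(x)\geq \rho_n\}$ is exactly Remark~\ref{In for valuation ring}), so your construction yields an explicit chain with empty intersection, which is closer in spirit to the ``more constructive proof'' the paper alludes to after the lemma; the paper's counting argument is shorter but nonconstructive. Two trivial points: your chain starts at $I_1$, so prepend $B(\alpha_1,I_0)=R$ to match the format of condition~\ref{sphericallycomp} literally; and the infinitude of $\overline{\FF}_p$ is not really essential in the borderline case --- you only need a unit residue class distinct from one prescribed nonzero class, which merely requires the residue field to have more than two elements (and one could rearrange the choice to avoid even that).
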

\begin{proof}
By Remark~\ref{In for valuation ring}, for $n$ a nonnegative integer, $I_n$ is the principal ideal generated by
$p^{\frac{1}{p} + \cdots + \frac{1}{p^n}}$, while $I_\infty$ is the principal ideal generated by $p^{\frac{1}{p-1}}$.
Each ball $B(r, I_\infty)$ contains an element which is algebraic over $\QQ$, since such elements are dense in $\Cp$ by Krasner's
lemma.  Furthermore, if two balls $B(r,I_{\infty})$ and $B(r',I_{\infty})$ intersect, they are in fact equal.  Therefore, there are only countably many such balls. On the other hand, one can construct uncountably many decreasing sequences $B(r_0, I_0) \supseteq B(r_1, I_1) \supseteq \cdots$ no two of which have the same intersection. For instance, 
take $x_0, x_1, \dots$ to be Teichm\"uller elements in $W(\mathbb{F}_p) \subseteq \OCp$, and put
\[
r_0 = x_0, r_1 = r_0 + x_1 p^{\frac{1}{p}}, r_2 = r_1  + x_2 p^{\frac{1}{p} + \frac{1}{p^2}}, \dots.
\]
Then any two of the resulting intersections $\cap_{i=0}^\infty B(r_i, I_i)$ are disjoint.
\end{proof}

\begin{Rmk}
It is possible to give a more constructive proof of Lemma~\ref{OCp lemma} using the explicit description of $\OCp$ given in \cite{Ked01}.
\end{Rmk}

\begin{Eg} \label{sphereeg}
Let $R$ denote the spherical completion of $\OCp$ constructed by Poonen in \cite{Poo93}.
We will show that $R$ satisfies \ref{surj}, and thus satisfies all of the labeled conditions except for 
\ref{pinvertible}.

We first recall the explicit construction of $R$.
Let $\ZZ_p\llbracket t^{\QQ} \rrbracket$ denote the ring of \emph{generalized power series}
over $\ZZ_p$; its elements are formal sums $\sum_{i \in \QQ, i \geq 0} c_i t^i$ with $c_i \in \ZZ_p$ such that
the set $\{i \in \QQ: c_i \neq 0\}$ is well-ordered. This ring is spherically complete for the $t$-adic valuation.
Poonen's spherical completion of $\OCp$ is then the ring $\ZZ_p\llbracket t^{\QQ} \rrbracket/(t-p)$.
In particular, $R/(p) \cong \FF_p\llbracket t^{\QQ} \rrbracket/(t)$. 

{}From this description, it is clear that $R$ satisfies \ref{sphericallycomp} and \ref{pthrootsmodp}. Finally, since $R$
is a valuation ring and there exists $x \in R$ for which $0 < v(x) < v(p)$ (e.g., the image of $t^{1/p}$),
Remark~\ref{finlevsurj for valuation rings} implies that $R$ satisfies \ref{finlevsurj}. Putting this together,
we deduce that $R$ satisfies \ref{surj}.
\end{Eg}

\begin{Eg} \label{mup2eg}
Take $R = \ZZ[\mu_{p^2}]$, where $\mu_{p^2}$ is a primitive $(p^2)$-nd root of unity. 
Condition \ref{Vinimage} holds because the element $\underline{x} = \sum_{i=0}^{p-1} [\mu_{p^2}^i] \in W(R)$ satisfies
$F(\underline{x}) = V(1)$. (Since $R$ is $p$-torsion-free, this last equality can be checked at the ghost component level,
where it is apparent.)
On the other hand, \ref{pthrootsmodp} does not hold: the element $(1-\omega_{p^2})$ has $p$-adic valuation $\frac{1}{p(p-1)}$, but there is no element of $R$ which has $p$-adic valuation $\frac{1}{p^2(p-1)}$.
Similarly, \ref{prmodp^2} does not hold.
\end{Eg}

\begin{Eg} \label{mupinftyeg}
Take $R = \ZZ[\mu_{p^\infty}]$, i.e., the ring of integers in the maximal abelian extension of $\QQ$. We will see that \ref{finlevsurj} holds but \ref{Teichinimage} does not.
(The same analysis applies to $\ZZ_p[\mu_{p^{\infty}}]$ or its $p$-adic completion.)

Note that $R$ satisfies \ref{Vinimage} because $R$ contains the subring $\ZZ[\mu_{p^2}]$ which satisfies
\ref{Vinimage} by Example~\ref{mup2eg}. Thus to establish \ref{finlevsurj}, it is sufficient to check
condition \ref{pthrootsmodp}. 
For this, note that for any expression $a_1 \mu_{p^{i_1}} + \cdots + a_n
\mu_{p^{i_n}}$ with $a_1,\dots,a_n \in \ZZ$, we have $a_1 \mu_{p^{i_1}} + \cdots + a_n
\mu_{p^{i_n}} \equiv (a_1 \mu_{p^{i_1+1}} + \cdots + a_n
\mu_{p^{i_n+1}})^p \mod p$.

To establish that $R$ does not satisfy \ref{Teichinimage}, we will instead check that $R$ does not satisfy \ref{pthroots}.
We will do this assuming $p > 2$, by checking that
the congruence $x^p \equiv 1-p \mod{p I_\infty}$ has no solution. 
This breaks down for $p=2$ because $1-p = -1$ is the square of $i := \mu_4$;
in this case, one can show by a similar argument (left to the reader) that
the congruence $x^2 \equiv i+2\mu_8 \mod{2 I_\infty}$ 
has no solution.

Assume by way of contradiction that $p>2$ and there exists $x \in R$ for which $x^p - 1+p \in pI_\infty$.
Recall that by Remark~\ref{In for valuation ring},
$I_\infty$ is the principal ideal generated by $p^{1/(p-1)}$.
Choose an integer $n \geq 2$ for which $x \in \ZZ[\mu_{p^n}]$, and put $z = 1 - \mu_{p^n}$. 
By Lemma~\ref{power of mu difference} below, $z^{p^{n-1}-p^{n-2}} \equiv -p \pmod{z^{p^n}}$;
in particular, the $p$-adic valuation of $1-\mu_{p^n}$ is 
$\frac{1}{p^{n-1}(p-1)}$. There must thus exist $y \in \ZZ[\mu_{p^n}]$
such that $(1 + y z^{p^{n-1}-p^{n-2}})^p \equiv 1-p \pmod{z^{p^n}}$; subtracting $1$ from both sides and dividing by $p$,
we obtain the congruence $yz^{p^{n-1} - p^{n-2}}-y^p \equiv -1 \pmod{z^{p^{n-1}}}$.
Using the isomorphism $\ZZ[\mu_{p^n}]/(z^{p^{n-1}}) \cong \FF_p[T]/(T^{p^{n-1}})$ sending $z$ to $T$,
we obtain a solution $w$ of the congruence $w^p - w T^{p^{n-1}-p^{n-2}} \equiv 1 \pmod{T^{p^{n-1}}}$ in $\FF_p[T]$.
But no such solution exists: we cannot have $w \equiv 0 \pmod{T^{p^{n-2}}}$, so there is a largest index $i < p^{n-2}$
such that the coefficient of $T^i$ in $w$ is nonzero, which forces $T^{i+p^{n-1}-p^{n-2}}$ to appear with a nonzero
coefficient in $w^p - w T^{p^{n-1}-p^{n-2}}$.
\end{Eg}

\begin{Lem} \label{power of mu difference}
For $p$ an odd prime, $n\geq 2$ an integer, and $\mu_{p^n}$ a
primitive $p^n$-th root of unity,
in $\ZZ[\mu_{p^n}]$ we have $(1-\mu_{p^n})^{p^n-p^{n-1}} \equiv -p
\mod{(1-\mu_{p^n})^{p^n}}$.
\end{Lem}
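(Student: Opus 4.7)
The plan is to reduce to the case $n=1$ by considering $\eta := \mu_{p^n}^{p^{n-1}}$, a primitive $p$-th root of unity, and relating the uniformizer $\pi' := 1 - \eta$ to $\pi := 1 - \mu_{p^n}$ via a binomial expansion with careful control of $\pi$-adic valuations.

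For the base case $n=1$, I would substitute $y = \pi$ into $\Phi_p(1-y) = \sum_{k=1}^p \binom{p}{k}(-1)^{k+1} y^{k-1}$, the minimal polynomial of $\pi = 1 - \mu_p$. For $p$ odd, the top coefficient is $+1$, and $\binom{p}{k}$ is divisible by $p$ for $2 \leq k \leq p-1$. Since $v_\pi(p) = p - 1$ in $\ZZ[\mu_p]$, each intermediate term has $\pi$-adic valuation at least $(p-1)+1 = p$, yielding $\pi^{p-1} \equiv -p \pmod{\pi^p}$.

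For the inductive step $n \geq 2$, the base case applied in $\ZZ[\eta]$ gives $(\pi')^{p-1} \equiv -p \pmod{(\pi')^p}$. Since $v_\pi(\pi') = p^{n-1}$, the ideals $((\pi')^p)$ and $(\pi^{p^n})$ coincide in the $\pi$-adic completion of $R := \ZZ[\mu_{p^n}]$, so this reads $(\pi')^{p-1} \equiv -p \pmod{\pi^{p^n}}$. It remains to show
\[
(\pi')^{p-1} \equiv \pi^{p^n - p^{n-1}} \pmod{\pi^{p^n}}.
\]
To this end, I would expand
\[
\pi' = 1 - (1-\pi)^{p^{n-1}} = \sum_{i=1}^{p^{n-1}}(-1)^{i-1}\binom{p^{n-1}}{i}\pi^i,
\]
noting that the $i = p^{n-1}$ term equals $+\pi^{p^{n-1}}$ for $p$ odd. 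Using $v_\pi\bigl(\binom{p^{n-1}}{i}\bigr) = (n-1-v_p(i))(p^n - p^{n-1})$, a short case analysis (minimized at $i = p^{n-2}$) shows that every term with $i < p^{n-1}$ has $\pi$-valuation at least $p^{n-2}(p^2 - p + 1) \geq 2p^{n-1}$. Hence $\pi' = \pi^{p^{n-1}}(1 + u)$ with $v_\pi(u) \geq p^{n-1}$, and expanding $(1+u)^{p-1}$ via the binomial theorem shows every nonconstant term lies in $\pi^{p^{n-1}}\widehat{R}$, so $(1+u)^{p-1} \equiv 1 \pmod{\pi^{p^{n-1}}}$ and the desired congruence follows.

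The main obstacle is the valuation inequality $p^{n-2}(p^2 - p + 1) \geq 2p^{n-1}$, equivalently $(p-1)^2 \geq p$, which is tight at $p = 3$ and fails at $p = 2$. This is precisely where the hypothesis that $p$ is odd enters essentially; the rest of the argument is routine manipulation with binomial coefficients and $\pi$-adic valuations.
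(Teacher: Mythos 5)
Your proof is correct and takes a genuinely different route from the paper's. The paper works directly with the polynomial $\Phi(1-T)$, where $\Phi$ is the $p^n$-th cyclotomic polynomial: after checking $\Phi(1-T) \equiv T^{p^n - p^{n-1}} \pmod{p}$, it reduces to showing $\Phi(1-T) \equiv p \pmod{(p^2, T^{p^{n-1}})}$, collapses this (via a freshman's-dream substitution) to the case $n=2$, and then computes each coefficient $\sum_{i}(-1)^k\binom{ip}{k}$ modulo $p^2$ using Lucas's theorem. You instead reduce the general case to the transparent $n=1$ case through the tower $\QQ(\mu_p) \subset \QQ(\mu_{p^n})$: the relation $(1-\mu_p)^{p-1} \equiv -p \pmod{(1-\mu_p)^p}$ falls straight out of the minimal polynomial, and the rest is a $\pi$-adic valuation estimate on $1-(1-\pi)^{p^{n-1}}$, using the exact formula $v_p\binom{p^{n-1}}{i} = (n-1) - v_p(i)$ in place of Lucas. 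Your version makes the role of the hypothesis $p$ odd very explicit (through the inequality $(p-1)^2 \geq p$, which is tight at $p=3$), whereas in the paper's proof oddness enters through sign bookkeeping. The two arguments are of comparable length; yours is arguably more conceptual (approximating $1-\mu_p$ by $\pi^{p^{n-1}}$), while the paper's stays inside $\ZZ[T]$ and avoids any appeal to a completion. One small point worth making explicit in a final writeup: you derive the congruence after passing to the completion $\widehat{R}$, so you should note that $R/\pi^{p^n}R \to \widehat{R}/\pi^{p^n}\widehat{R}$ is an isomorphism, so the congruence descends to $\ZZ[\mu_{p^n}]$ itself.
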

\begin{proof}
Let $\Phi(T) = \sum_{i=0}^{p-1} T^{ip^{n-1}}$ denote the $p^n$-th
cyclotomic polynomial, so that we may identify $\ZZ[\mu_{p^n}]$ with
$\ZZ[T]/(\Phi(1-T))$  by identifying $\mu_{p^n}$ with $1-T$. As
\[
\Phi(1-T) \equiv \sum_{i = 0}^{p-1} (1-T^{p^{n-2}})^{pi} \mod{p^2},
\]
we have $T^{p^n-p^{n-1}} - \Phi(1-T) \equiv 0 \mod{p}$. Since the constant term of 
$\Phi(1-T)$ is $p$, it suffices to check that $\Phi(1-T) \equiv p \mod{(p^2, T^{p^{n-1}})}$;
this reduces to the case $n=2$. In this case, 
for $k=1,\dots,p-1$, the coefficient of $T^k$ in $\Phi(1-T)$ is
$\sum_{i=0}^{p-1} (-1)^k \binom{ip}{k}$. We may  write
$\binom{pi}{k} = \frac{ik}{p}
\binom{ip-1}{kp-1}$ and then invoke Lucas's criterion to
deduce that $\binom{ip-1}{kp-1} \cong \binom{p-1}{k-1}
\mod{p}$.
Therefore, $\sum_{i=0}^{p-1} (-1)^k \binom{ip}{k} \equiv (-1)^k
\binom{p}{k} \sum_{i=0}^{p-1} i \equiv 0 \mod{p^2}$,
as desired.
\end{proof}

\section{Almost purity}
\label{section almost purity}

We conclude with one motivation for studying condition \ref{finlevsurj}: 
it provides a natural context for the concept of
\emph{almost purity}, as introduced by Faltings and studied more recently 
by the second author and Liu in \cite{KL11} and by Scholze in \cite{Sch11}. 
More precisely, \ref{finlevsurj} amounts to an absolute version (not relying on a valuation subring)
of the condition for a ring to be \emph{integral perfectoid} in the sense of Scholze.

We begin by defining the adjective \emph{almost}.
See \cite{GR03} for more general setting.

\begin{Def}
Let $R$ be a $p$-torsion-free ring which is integrally closed in $R_p := R[p^{-1}]$
and which satisfies condition \ref{finlevsurj}.
A \emph{$p$-ideal} of $R$ is an ideal $I$ of $R$ 
such that $I^n \subseteq (p)$ for some positive integer $n$.
An $R$-module $M$ is \emph{almost zero} if $IM = 0$ for every $p$-ideal $I$ of $R$. 
\end{Def}

\begin{Thm} \label{almost purity}
Let $R$ be a $p$-torsion-free ring which is integrally closed in $R_p := R[p^{-1}]$
and which satisfies condition \ref{finlevsurj}. Let $S_p$ be a finite \'etale $R_p$-algebra, let $S_0$ be the integral
closure of $R$ in $S_p$, and let $S$ be any $R$-subalgebra of $S_0$ such that $S/S_0$ is an almost zero $R$-module.
\begin{enumerate}
\item[(a)]
The ring $S$ also satisfies condition \ref{finlevsurj}.
\item[(b)]
For any $p$-ideal $I$ of $R$, there exist a finite free $R$-module $F$ and $R$-module homomorphisms
$S \to F \to S$ whose composition is multiplication by some $t \in R$ for which $I \subseteq (t)$.
\item[(c)]
The image of $S$ under the trace pairing map $S_p \to \operatorname{Hom}_{R_p}(S_p, R_p)$ is almost equal to 
the image of the natural map from $\operatorname{Hom}_R(S,R)$ to $\operatorname{Hom}_{R_p}(S_p, R_p)$.
\end{enumerate}
\end{Thm}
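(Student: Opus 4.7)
My plan is to reduce Theorem~\ref{almost purity} to the almost purity theorems of Kedlaya--Liu \cite{KL11} and Scholze \cite{Sch11} via the equivalence between condition \ref{finlevsurj} and the ``integral perfectoid'' property flagged in the paragraph preceding the statement. The three parts split naturally: (a) is an untilting assertion about the finite \'etale structure, while (b) and (c) are formal consequences of the almost-projectivity package of Gabber--Ramero \cite{GR03}.

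First I would set up the tilt $R^\flat := \varprojlim_{x\mapsto x^p} R/pR$, which is a perfect $\FF_p$-algebra since \ref{finlevsurj} implies \ref{pthrootsmodp} (via the chain $\ref{finlevsurj} \Rightarrow \ref{pthrootsmodfinite} \Rightarrow \ref{pthrootsmodp}$). Using Lemma~\ref{83111lem} together with the explicit description of $p$ in Lemma~\ref{components of p}, I would construct a surjection $\theta\colon W(R^\flat) \to R$ whose kernel is generated by a distinguished element $\xi$. Condition \ref{finlevsurj} applied at each finite Witt vector level is exactly what permits the inductive lifting needed to build $\theta$ and identify $R$ as an integral perfectoid ring in the sense of Scholze.

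For part (a), I would tilt the finite \'etale $R_p$-algebra $S_p$ to obtain a corresponding finite \'etale extension over $R^\flat[\xi^{-1}]$. Its integral closure in the appropriate overring of $R^\flat$ is perfect and automatically almost finite \'etale over $R^\flat$, because on a perfect ring of characteristic $p$ the Frobenius trivially provides the splittings required by the almost purity condition. Untilting this extension through $W(-)$ produces a ring $\widetilde{S}$ that is almost isomorphic to $S_0$; since the cokernel of $S \hookrightarrow S_0$ is almost zero by hypothesis, the same holds for $\widetilde{S}$ relative to $S$. Reading condition \ref{finlevsurj} off the Witt-vector description of $\widetilde{S}^\flat$ and transferring it back via the almost isomorphism gives (a).

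Parts (b) and (c) then follow from the almost-projectivity formalism: once $S$ is almost finite \'etale over $R$, it is almost finitely generated projective as an $R$-module. A dual basis constructed from the trace pairing on $S_p/R_p$ yields the finite free module $F$ and the maps in (b); the scalar $t$ is chosen inside any prescribed $p$-ideal $I$ using that the different of an almost finite \'etale extension is almost equal to the full ring. Part (c) is the resulting dual statement: the image of $S$ under the trace pairing is almost equal to $\operatorname{Hom}_R(S,R)$, and this follows formally from (b). The main obstacle is the tilting step in part (a): verifying that the characteristic-$p$ extension genuinely untilts to $S_0$ up to almost isomorphism is the substantive input, and is best imported wholesale from \cite{KL11} or \cite{Sch11} rather than reconstructed via direct Witt-vector computation, since the required untilting correspondence for finite \'etale extensions is one of the central results of those works.
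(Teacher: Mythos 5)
Your overall strategy (lean on the almost purity machinery of \cite{KL11}/\cite{Sch11}) is in the same spirit as the paper, but the specific route has a genuine gap: neither $R$ nor $S$ is assumed $p$-adically complete, and your whole tilting setup silently requires completeness. The map you propose, $\theta\colon W(R^\flat)\to R$ with $R^\flat=\varprojlim_{x\mapsto x^p}R/pR$, does not exist in general: $\theta$ naturally takes values in the $p$-adic completion of $R$, not in $R$ itself (already for $R=\ZZ[\mu_{p^\infty}]$ as in Example~\ref{mupinftyeg}, $W(R^\flat)$ surjects onto $\widehat{R}$ but not onto $R$). Likewise, the tilting equivalence for finite \'etale algebras and the untilting of the integral closure are theorems about complete (perfectoid/uniform Banach) objects, so your ring $\widetilde{S}$ lives over the completion, and the claim that it is almost isomorphic to $S_0$ and that condition \ref{finlevsurj} can simply be ``transferred back via the almost isomorphism'' is precisely the non-formal content of the theorem, not a consequence of the cited works. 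The paper confronts this head on: it uses the hypothesis that $R$ is integrally closed in $R_p$ to build a power-multiplicative seminorm $v$, completes $R_p$ to a uniform Banach ring $A$ with $\mathfrak{o}_A$ satisfying \ref{finlevsurj}, invokes \cite[Theorem~3.6.12]{KL11} for $B=A\otimes_R S$, and then spends the bulk of the argument pulling surjectivity of Frobenius back from $\mathfrak{o}_B$ to the uncompleted ring $S$, via the elements $x_1,x_2$ with $x_1^p\equiv -p \pmod{p^2R}$, $x_2^p\equiv x_1\pmod{pR}$ (furnished by \ref{pthrootsmodp} and \ref{prmodp^2}), the containment $\psi^{-1}(\mathfrak{m}_B)\subset S$, and a bootstrapping argument showing Frobenius is surjective on $S/(x_1^i,p)$ for $i=1,\dots,p$. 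That approximation-and-descent step is exactly what your proposal omits.

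A secondary issue: for (b) and (c) you assert that $S$ is almost finitely generated projective over $R$ and that the different is almost trivial, but in the paper's absolute almost setting (almost zero defined via $p$-ideals of $R$, with no base perfectoid field or valuation ring) these statements are again part of what must be proved; the paper reduces them to the corresponding Banach-level statements \cite[Theorem~5.5.9]{KL11} about $\mathfrak{o}_A$ and $\mathfrak{o}_B$ rather than deducing them formally from (a). So the proposal correctly identifies the external input but misplaces where the real work lies: the substantive step is not the characteristic-$p$ or tilting side, which the references do supply, but the passage between the given non-complete rings $R,S$ and the completed objects to which those references apply.
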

\begin{proof}
For each $t \in \mathbb{Q}$, choose integers $r,s \in \mathbb{Z}$ with $s > 0$ and $r/s = t$.
Since $R$ is integrally closed in $R_p$, the set
\[
R_t := \{x \in R[p^{-1}]: p^{-r} x^s \in R\}
\]
depends only on $t$. The function $v: R_p \to (-\infty, +\infty]$ given by
\[
v(x) := \sup\{t \in \mathbb{Q}: x \in R_t\}
\]
satisfies $v(x-y) \geq \min\{v(x), v(y)\}$,
$v(xy) \geq v(x) + v(y)$, and $v(x^2) = 2 v(x)$. 

Let $A$ be the separated completion of
$R_p$ under the norm $|\cdot| = e^{-v(\cdot)}$, and define the subring $\mathfrak{o}_A = \{x \in A: |x| \leq 1\}$
and the ideal $\mathfrak{m}_A = \{x \in A: |x| < 1\}$.
Let $\psi: R_p \to A$ be the natural homomorphism; then $\psi^{-1}(\mathfrak{o}_A)$ contains $R$ but may be larger. However, we do have $\psi^{-1}(\mathfrak{m}_A) \subset R$.

Since \ref{finlevsurj} implies \ref{pthrootsmodp} and \ref{prmodp^2}, we can choose $x_1, x_2 \in R$ with 
\[
x_1^p \equiv -p \mod{p^2R}, \qquad
x_2^p \equiv x_1 \mod{p R}.
\]
Then $\psi(x_1), \psi(x_2)$ are units in $A$, and for all $y \in A$,
\[
|\psi(x_1) y| = p^{-1/p} |y|, \qquad |\psi(x_2) y| = p^{-1/p^2} |y|.
\]

Given $\overline{y} \in \mathfrak{o}_A/(p)$, choose $y \in R[p^{-1}]$ so that $\psi(y)$ lifts $\overline{y}$.
Then $x_2^p y \in \psi^{-1}(\mathfrak{m}_A) \subset R$, so since $R$ satisfies \ref{finlevsurj},
we can find $z \in R$ with $x_2^p y \equiv z^p \mod{pR}$.
The element $\psi(z/x_2) \in \mathfrak{o}_A$ has the property that $\psi(z/x_2)^p \equiv \psi(y) \mod{(p/\psi(x_2)^p) \mathfrak{o}_A}$;
it follows that Frobenius is surjective on $\mathfrak{o}_A/(\psi(x_1)^{p-1})$.
This implies that Frobenius is also surjective on $\mathfrak{o}_A$: 
given $y \in \mathfrak{o}_A$, we can first find $z_0, y_1 \in \mathfrak{o}_A$ with
$y = z_0^p + \psi(x_1)^{p-1} y_1$, then find $z_1,y_2 \in \mathfrak{o}_A$
with $y_1 = z_1^p + \psi(x_1)^{p-1} y_2$, and then
$z := z_0 + \psi(x_2)^{p-1} z_1$ will have the property that $z^p \equiv y \mod{p \mathfrak{o}_A}$.
That is, $\mathfrak{o}_A$ also satisfies \ref{pthrootsmodp};
since \ref{pmodp^2new} is evident (using $x_1$), $\mathfrak{o}_A$ satisfies \ref{finlevsurj}.

Put $B = A \otimes_R S$, and extend $\psi$ by linearity to a homomorphism $\psi: S_p \to B$. By \cite[Theorem~3.6.12]{KL11}, 
there is a unique power-multiplicative norm on $B$ under which it is a finite
Banach $A$-module, and for this norm the subring $\mathfrak{o}_B = \{x \in B: |x| \leq 1\}$ also satisfies \ref{finlevsurj}.
As in \cite[Remark~2.3.14]{KL11}, for $\mathfrak{m}_B = \{x \in B: |x| < 1\}$, we have $\psi^{-1}(\mathfrak{m}_B) \subset S$.

Given $\overline{y} \in S/(p)$,
choose a lift $y \in S$ of $\overline{y}$. Since $B$ satisfies \ref{finlevsurj} and $\psi(B[p^{-1}])$ is dense in $S$,
we can find $z \in \psi^{-1}(\mathfrak{o}_B)$ for
which $u := z^p - y$ satisfies $|\psi(u)| \leq p^{-1}$. 
In particular, $u \in \psi^{-1}(\mathfrak{m}_B) \subset S$;
moreover, we may write $x_1^p = -p + p^2 w$ for some $w \in R$ and then write
\[
u = p (u/p) = (-x_1^p + p^2 w)(u/p) = -x_1 (x_1^{p-1} u/p) + p u w.
\]
The quantity $x_1^{p-1} u/p$ again belongs to $\psi^{-1}(\mathfrak{m}_B) \subset S$, so
$u \in (x_1,p) S$. Therefore Frobenius is surjective on $S/(x_1,p)$; by arguing as before (using the fact that
$x_2^p \equiv x_1 \mod{pR}$), we deduce that Frobenius is surjective on $S/(x_1^i,p)$ for $i=2,\dots,p$.
Therefore, $S$ satisfies \ref{pthrootsmodp};
since \ref{pmodp^2new} is again evident, $S$ satisfies \ref{finlevsurj}. This proves (a).
The proofs of (b) and (c) similarly reduce to the corresponding statements about $\mathfrak{o}_A$ and $\mathfrak{o}_B$,
for which see \cite[Theorem~5.5.9]{KL11} or \cite{Sch11}.
\end{proof}
\begin{Cor}
For $R$ and $S$ as in Theorem~\ref{almost purity}, $\Omega_{S/R} = 0$.
\end{Cor}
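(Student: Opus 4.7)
The plan is to establish three facts about $\Omega := \Omega_{S/R}$: (a) $\Omega = p\,\Omega$; (b) $\Omega$ is finitely generated as an $S$-module; (c) $\Omega[p^{-1}] = 0$. Together these force $\Omega = 0$: by (b) and (c) there will be a uniform $N$ with $p^N \Omega = 0$, and (a) iterated will give $\Omega = p^N \Omega$, hence $\Omega = 0$.

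For (a), I use Theorem~\ref{almost purity}(a), which asserts that $S$ itself satisfies condition~\ref{finlevsurj}, and hence condition~\ref{Teichinimagefin}$'$: for every $s \in S$ there exist $s_1, u_1 \in S$ with $s = s_1^p + p u_1$ (reading off the first ghost component of a lift of the Teichm\"uller $(s,0) \in W_p(S)$). Applying the universal $R$-derivation $d : S \to \Omega$ gives
\[
ds = p\bigl(s_1^{p-1}\,ds_1 + du_1\bigr) \in p\,\Omega.
\]
Since $\Omega$ is generated as an $S$-module by $d(S)$, this yields $\Omega = p\,\Omega$, and by iteration $\Omega = p^n \Omega$ for every $n \geq 1$.

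For (b), I apply Theorem~\ref{almost purity}(b) to the $p$-ideal $(p) \subseteq R$. This produces $t \in R$ with $p \in (t)$, a finite free $R$-module $F$ of some rank $n$, and $R$-linear maps $\alpha : S \to F$, $\beta : F \to S$ with $\beta \circ \alpha = t \cdot \mathrm{id}_S$. Fixing a basis $e_1, \ldots, e_n$ of $F$, write $\alpha(s) = \sum_i a_i(s)\,e_i$ with $a_i(s) \in R$, and put $f_i := \beta(e_i) \in S$. Then $ts = \sum_i a_i(s) f_i$ for every $s \in S$. Applying $d$ and using that $d$ kills elements of $R$, we get $t\,ds = \sum_i a_i(s)\,df_i$, so $t\,\Omega$ lies in the $S$-submodule generated by $df_1, \ldots, df_n$. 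Since $\Omega = p\,\Omega \subseteq t\,\Omega$ by (a), we conclude that $\Omega$ itself is generated over $S$ by $df_1, \ldots, df_n$, proving (b). Part (c) is immediate: $S_p$ is finite \'etale over $R_p$, so $\Omega_{S_p/R_p} = 0$, and this module agrees with $\Omega[p^{-1}]$. Hence every element of $\Omega$ is killed by some power of $p$; by (b) there is a common $N$ with $p^N df_i = 0$ for all $i$, so $p^N \Omega = 0$, and combined with (a) iterated we get $\Omega = p^N \Omega = 0$.

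The main obstacle I anticipate is (b). The Frobenius-divisibility argument alone only yields the pointwise statement that each element of $\Omega$ is simultaneously $p$-divisible and $p$-power torsion, which does not force vanishing in general (for instance $\Qp/\Zp$ has both properties). The essential leverage comes from part (b) of Theorem~\ref{almost purity}, whose almost-finite-projective splitting simultaneously supplies a divisor $t$ of $p$ and an explicit finite set of generators of $t\,\Omega$, upgrading "divisibility" to "uniformly bounded torsion", at which point (a) closes the argument.
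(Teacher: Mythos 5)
Your proof is correct, and its engine is the same as the paper's: use condition \ref{finlevsurj} for $S$ (Theorem~\ref{almost purity}(a)) to write each $s \in S$ as $y^p + pz$, so that $ds = py^{p-1}\,dy + p\,dz$ and hence $\Omega_{S/R}$ is $p$-divisible. Where you genuinely diverge is in how the torsion bound is obtained. The paper simply asserts that $\Omega_{S/R}$ is killed by a single power $p^n$, citing only that $S_p$ is finite \'etale over $R_p$, and then runs a downward induction on $n$ via the displayed identity; it never invokes part (b) of Theorem~\ref{almost purity}. You instead apply Theorem~\ref{almost purity}(b) with $I=(p)$ to get $t$ dividing $p$ and an $R$-linear splitting $S \to F \to S$ through a finite free module, deduce that $t\,\Omega_{S/R}$ (and then, by $p$-divisibility, $\Omega_{S/R}$ itself) is generated by the finitely many elements $df_1,\dots,df_n$, and only then use $\Omega_{S/R}[p^{-1}] = \Omega_{S_p/R_p} = 0$ to extract a uniform $N$ with $p^N\Omega_{S/R}=0$. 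This buys a complete justification of exactly the point the paper glosses over: as you observe, the pointwise statement that every element is $p$-divisible and $p$-power torsion does not by itself force vanishing (e.g.\ $\Qp/\Zp$), so some finiteness input is needed, and your use of the almost-splitting supplies it; the cost is reliance on the harder part (b) of the theorem, which the paper's shorter argument avoids by fiat. One micro-point worth a sentence if you write this up: the identification $\Omega_{S/R}[p^{-1}] \cong \Omega_{S_p/R_p}$ uses $S[p^{-1}] = S_p$, which holds because $S_0/S$ is almost zero (so $pS_0 \subseteq S$) and $S_0[p^{-1}] = S_p$.
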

\begin{proof}
Since $S[p^{-1}]$ is finite \'etale over $R[p^{-1}]$, $\Omega_{S/R}$ is killed by $p^n$ for some nonnegative integer $n$.
If $n>0$, then for each $x \in S$ we may apply Theorem~\ref{almost purity} to write $x = y^p + pz$.
Then $dx = py^{p-1}\,dy + p\,dz$ is also killed by $p^{n-1}$. By induction, it follows that we may take $n=0$,
proving the claim.
\end{proof}
\begin{Rmk}
Note that the proof of Theorem~\ref{almost purity} involves the facts that
$\psi(R)$ and $\mathfrak{o}_A$ are almost isomorphic (using $\mathfrak{o}_A$ to define \emph{almost}), 
as are $\psi(S)$ and $\mathfrak{o}_B$.
Also, Theorem~\ref{almost purity} can be applied with $S_p = R_p$, to show that any $R$-subalgebra $R'$ of $R$
for which $R/R'$ is almost zero also satisfies \ref{finlevsurj}.
\end{Rmk}

\section*{Acknowledgements} 

The authors thank Laurent Berger, James Borger, Lars Hesselholt, Abhinav Kumar, 
Ruochuan Liu, Joe Rabinoff, and Liang Xiao for helpful discussions.

\bibliography{padicHodge3}
\bibliographystyle{plain} 

\end{document}